\newtheorem{theorem}{Theorem}[subsection]
\newtheorem{lem}[theorem]{Lemma}
\newtheorem{thm}[theorem]{Theorem}
\newtheorem{que}[theorem]{Question}
\begin{document}
\setlength{\oddsidemargin}{0cm}
\setlength{\evensidemargin}{0cm}

\title{Left-symmetric Structures on Complex Simple Lie Superalgebras}

\author{Runxuan Zhang}

\address{School of Mathematics and Statistics, Northeast Normal University,
 Changchun 130024, P.R. China}

\email{zhangrx728@nenu.edu.cn}

\date{\today}

\def\shorttitle{Left-symmetric Structures on Complex Simple Lie Superalgebras}

\begin{abstract}
A well-known fact is that there does not exist any compatible left-symmetric structures
 on a finite-dimensional complex semisimple Lie algebra (see \cite{Chu1974}). This result is not valid in semisimple Lie superalgebra  case.
In this paper, we study the compatible  Left-symmetric superalgebra (LSSA for short) structures  on complex simple Lie superalgebras.
We  prove that
there is not any compatible LSSA structure on  a finite-dimensional complex simple
Lie superalgebra except for the classical simple Lie superalgebra  $A(m,n)(m\neq n)$
and Cartan simple Lie superalgebra $W(n)(n\geq 3)$.  We also classify  all compatible LSSAs
with a right-identity on $A(0,1)$.
\end{abstract}

\subjclass[2010]{17B60, 17B20.}

\keywords{Simple Lie superalgebra; left-symmetric superalgebra.}

\maketitle
\baselineskip=16pt

\section{Introduction}
\setcounter{equation}{0}
\renewcommand{\theequation}
{1.\arabic{equation}}

\setcounter{theorem}{0}
\renewcommand{\thetheorem}
{1.\arabic{theorem}}

Let $k$ be a field. A nonassociative algebra $(\mathbb{A},\cdot)$ over $k$ is called a \textit{left-symmetric algebra} if $(x\cdot y)\cdot z-x\cdot(y\cdot z)=(y\cdot z)\cdot x-y\cdot(x\cdot z)$ for all $x,y,z\in \mathbb{A}$.
A superalgebra $(A=A_{\bar{0}}\oplus A_{\bar{1}},\cdot)$ is called a \textit{left-symmetric superalgebra} (\textit{LSSA} for short) if the associator $(x,y,z)=(x\cdot y)\cdot z-x\cdot(y\cdot z)$ is supersymmetric in $x,y$, i.e.,
$(x,y,z)=(-1)^{|x||y|}(y,x,z)$; or equivalently,  $$(x\cdot y)\cdot z-x\cdot(y\cdot z)=(-1)^{|x||y|}((y\cdot z)\cdot x-y\cdot(x\cdot z))$$ for all $x,y,z\in A$. We say that an LSSA $N$ is \textit{Novikov} if it satisfies an additional condition $(z\cdot x)\cdot y=(-1)^{|x||y|} (z\cdot y)\cdot x$ for all $x,y,z\in N$.

To calculate the cohomologies of associative algebras
and prove the  commutativity of cohomologies space, a kind of graded left-symmetric algebras were first introduced by Gerstenhaber \cite{Ger1963} in 1963.
Recently, LSSAs,  the super-version of left-symmetric algebras,  also appeared in several fields of mathematics and mathematical
physics as a natural algebraic structure.   In \cite{VM1996}, a linear basis of the free LSSA was constructed.
In 2008, Kong and Bai  classified  all compatible LSSA  on the super-Virasoro algebras satisfying some restricted conditions (\cite{KB2008}).
A close relation  between LSSAs and the graded classical Yang-Baxter equation in Lie superalgebras was obtained in \cite{WHB2010}.
As a subclass of LSSAs,  Novikov superalgebras were studied extensively in \cite{Xu2000a, Xu2000b, KC2009, AB2010, Zha2011,ZB2012,ZHB2011}.
Because LSSAs are Lie-admissible superalgebras,  a fundamental problem asks
whether there exist any compatible LSSAs on a give Lie superalgebra. The non-super version of this problem is of importance in geometry.
Actually, if $G$ is a connected and simply connected Lie group over the field of real numbers whose  Lie algebra is $\mathfrak{g}$, then  there is a left-invariant flat and torsion free connection, that is, an affine structure on $G$ if and only if $\mathfrak{g}$ has a compatible
left-symmetric algebra (\cite{Kos1961, Med1981}). There are a lot of papers addressing the compatible left-symmetric algebras on a given Lie algebra (see \cite{He1979, Kim1986, Bur1996, DIO1997, Bau1999}). In particular, an important result given by Chu \cite{Chu1974} asserts that  there do not exist any compatible left-symmetric algebra  on a complex finite-dimensional semisimple Lie algebra.

In view of pure mathematics, one can consider the question that asks whether there exist a compatible LSSA structure on a given complex finite-dimensional semisimple Lie superalgebra. We can construct a compatible LSSA structure on the Cartan complex simple Lie superalgebra $W(n)(n\geq 3)$ as follows: Let $\wedge(n)$ denote the Grassmann superalgebra with the
generators $\xi_{1},\cdots,\xi_{n}$, whose $\mathbf{Z}_{2}$-gradation is given
by $|\xi_{i}|=\bar{1}$ for $i=1,\cdots,n$. We write $W(n)$ for
$\textrm{Der}\wedge(n)$  and note that it is a Cartan simple Lie
superalgebra. Recall  that every
derivation $d\in W(n)$ can be written in the form
$$d=\sum_{i}u_{i}\frac{\partial}{\partial \xi_{i}},~~u_{i}\in
\wedge(n),$$ where $\frac{\partial}{\partial \xi_{i}}$ is the
derivation defined by $\frac{\partial}{\partial
\xi_{i}}(\xi_{j})=\delta_{ij}$ (see \cite{Kac1977}). We define the following
product on $W(n)$:
$$u\frac{\partial}{\partial \xi_{i}}\circ v\frac{\partial}{\partial \xi_{j}}=u\frac{\partial v}{\partial \xi_{i}}\frac{\partial}{\partial \xi_{j}},~~ u,v\in \wedge(n).$$
It is not difficult to check that the superalgebra $(W(n),\circ)$ is an LSSA.
This example indicates that the left-symmetric structures on a semisimple Lie superalgebra could be more complicated than the semisimple Lie algebra case.

In this paper, our attention is focused on the  compatible LSSA structures on complex simple Lie superalgebras.
In section 2, we prove that there is not any compatible LSSA structure on  a finite-dimensional complex simple
Lie superalgebra except for the classical simple Lie superalgebra  $A(m,n)(m\neq n)$
and Cartan simple Lie superalgebra $W(n)(n\geq 3)$.
Section 3 deals with the  compatible LSSAs on $A(0,1)$;
we construct a family of LSSAs $B_{2,\alpha}$ and two exceptional LSSA $B_{1},\widetilde{B_{2,-1}}$, and prove that
given a compatible LSSA $B$ on $A(0,1)$, if $B$ has a right-identity then it is isomorphic to one of $B_{1},\widetilde{B_{2,-1}}$, $B_{2,\alpha}$.

Throughout this paper, all (super)algebras are assumed to be finite-dimensional and over the field of complex numbers.

\section{Left-symmetric Superalgebras on Simple Lie Superalgebras}
\setcounter{equation}{0}
\renewcommand{\theequation}
{2.\arabic{equation}}

\setcounter{theorem}{0}
\renewcommand{\thetheorem}
{2.\arabic{theorem}}

Let us recall the  result due to Kac  (\cite{Kac1977}): all finite-dimensional
simple Lie superalgebras  that are not Lie algebras consist of the following classical and Cartan  Lie superalgebras
\begin{eqnarray*}
\textrm{classical}&:&A(m,n), n>m\geq 0; A(n,n),n\geq1; B(m,n),m\geq0,n\geq1; C(n),n\geq3;     \\&& D(m,n),m\geq2,n\geq1;   D(2,1;\alpha),\alpha\neq 0,-1; F(4);G(3);
P(n), n\geq2; \\&&  Q(n),n\geq2;\\
\textrm{Cartan}&:&W(n),n\geq3;  S(n),n\geq4;  \tilde{S}(n), n\geq 4, n \textrm{ even}; H(n),n\geq5.
\end{eqnarray*}
Note that
$D(2,1;\alpha)$ and  $D(2,1;\beta)$ are isomorphic  if and only if $\alpha$ and
$\beta$  lie in the same orbit of the group $V$ of order 6 generated by
$\alpha\mapsto -1-\alpha, \alpha\mapsto 1/\alpha$.

\begin{lem}\label{lem:2.1}
Let $\mathcal {G}=\mathcal {G}_{\bar{0}}\oplus\mathcal {G}_{\bar{1}}$ be a Lie superalgebra. If
 $\mathcal {G}_{\bar{0}}$  does not have a compatible left-symmetric algebra,
then there dose not exist a compatible LSSA  on $\mathcal {G}$.
\end{lem}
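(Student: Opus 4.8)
The plan is to establish the lemma by proving its contrapositive through a restriction argument: I assume $\mathcal{G}$ carries a compatible LSSA product $\cdot$ and show that its restriction to the even part $\mathcal{G}_{\bar{0}}$ is a compatible left-symmetric algebra. Here I read \emph{compatible} in the usual Lie-admissible sense, namely that the sub-adjacent super-commutator $[x,y]=x\cdot y-(-1)^{|x||y|}y\cdot x$ recovers the given bracket of $\mathcal{G}$. The whole point is that the extra signs appearing in the super-identities become trivial once all arguments are even, so the superalgebra data collapse onto honest (ungraded) left-symmetric data.

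First I would check closure: since $(\mathcal{G},\cdot)$ is a superalgebra, the product respects the $\mathbf{Z}_{2}$-grading, $\mathcal{G}_{\alpha}\cdot\mathcal{G}_{\beta}\subseteq\mathcal{G}_{\alpha+\beta}$, so for $x,y\in\mathcal{G}_{\bar{0}}$ we have $x\cdot y\in\mathcal{G}_{\bar{0}}$; thus $(\mathcal{G}_{\bar{0}},\cdot)$ is a genuine subalgebra on which the product is defined. Next I would verify left-symmetry: for $x,y,z\in\mathcal{G}_{\bar{0}}$ one has $|x||y|=0$, so the supersymmetry of the associator $(x,y,z)=(-1)^{|x||y|}(y,x,z)$ reads simply $(x,y,z)=(y,x,z)$, which is exactly the defining identity $(x\cdot y)\cdot z-x\cdot(y\cdot z)=(y\cdot x)\cdot z-y\cdot(x\cdot z)$ of a left-symmetric algebra.

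Finally I would check compatibility of the induced bracket: for even $x,y$ the sub-adjacent commutator is $x\cdot y-(-1)^{|x||y|}y\cdot x=x\cdot y-y\cdot x$, which is precisely the restriction of $[\,\cdot\,,\cdot\,]$ to $\mathcal{G}_{\bar{0}}$, i.e. the Lie algebra structure of $\mathcal{G}_{\bar{0}}$. Combining the three checks, $(\mathcal{G}_{\bar{0}},\cdot)$ is a compatible left-symmetric algebra on the Lie algebra $\mathcal{G}_{\bar{0}}$. Contrapositively, if $\mathcal{G}_{\bar{0}}$ admits no such structure, then no compatible LSSA can exist on $\mathcal{G}$, which is the claim.

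There is no serious obstacle in this argument; it is a clean functoriality-type observation rather than a computation. The only thing one must be attentive to is the sign bookkeeping in the super-identities and the confirmation that compatibility is inherited by the even part, both of which hold precisely because every sign $(-1)^{|x||y|}$ evaluates to $1$ when $x,y\in\mathcal{G}_{\bar{0}}$. This lemma is evidently intended as the reduction step that lets one import Chu's theorem on semisimple Lie algebras to rule out LSSA structures on those simple Lie superalgebras whose even part is semisimple, so in the write-up I would keep it short and then record the intended application to $\mathcal{G}_{\bar{0}}$ in the subsequent case analysis.
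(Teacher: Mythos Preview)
Your argument is correct and is exactly the approach of the paper, which simply asserts in one line that if $A$ is a compatible LSSA on $\mathcal{G}$ then $A_{\bar 0}$ is a compatible left-symmetric algebra on $\mathcal{G}_{\bar 0}$, yielding a contradiction. You have merely spelled out the three routine checks (closure under the grading, the associator identity with trivial signs, and compatibility of the commutator) that the paper leaves implicit.
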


\begin{proof}
If $A$ is a compatible
LSSA on the Lie superalgebra $\mathcal {G}$, then $A_{\bar 0}$ is
a compatible left-symmetric algebra  on $\mathcal {G}_{\bar{0}}$, which is a contradiction.
\end{proof}

\begin{lem}[\cite{Chu1974}]\label{lem:2.2}
There does not exist any compatible left-symmetric algebra
 on a finite-dimensional semisimple Lie algebra over a field of
characteristic zero.
\end{lem}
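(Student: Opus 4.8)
The plan is to argue by contradiction, using the left- and right-multiplication operators together with the vanishing of the first Lie algebra cohomology. Suppose $(\mathfrak{g},\cdot)$ is a compatible left-symmetric algebra on a finite-dimensional semisimple Lie algebra $\mathfrak{g}$ over a field $k$ of characteristic zero, so that $x\cdot y-y\cdot x=[x,y]$ for all $x,y$. Write $L_x,R_x\in\mathrm{End}(\mathfrak{g})$ for left and right multiplication by $x$. The first step is to rewrite the left-symmetric identity $(x,y,z)=(y,x,z)$ with $z$ as the acting variable, obtaining $[L_x,L_y]=L_{x\cdot y}-L_{y\cdot x}=L_{[x,y]}$; hence $L\colon\mathfrak{g}\to\mathfrak{gl}(\mathfrak{g})$ is a representation of $\mathfrak{g}$ on the underlying space.

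The second step is to produce a right identity. Compatibility says exactly that the identity map $\mathrm{id}\colon\mathfrak{g}\to\mathfrak{g}$ is a $1$-cocycle for the module structure given by $L$, since the cocycle condition $\mathrm{id}([x,y])=L_x\,\mathrm{id}(y)-L_y\,\mathrm{id}(x)$ is precisely $[x,y]=x\cdot y-y\cdot x$. By Whitehead's first lemma, $H^1(\mathfrak{g},\mathfrak{g})=0$ for the finite-dimensional module $(\mathfrak{g},L)$, so this cocycle is a coboundary: there is an element $e\in\mathfrak{g}$ with $x=\mathrm{id}(x)=L_x e=x\cdot e$ for all $x$. Thus $e$ is a right identity, i.e.\ $R_e=\mathrm{id}_{\mathfrak{g}}$.

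The final step is a trace computation. Since $\mathfrak{g}$ is semisimple it is perfect, $\mathfrak{g}=[\mathfrak{g},\mathfrak{g}]$, so both $\mathrm{tr}\circ L$ and $\mathrm{tr}\circ\mathrm{ad}$ are Lie algebra homomorphisms into the abelian algebra $k$ and hence vanish identically; in particular $\mathrm{tr}(L_x)=0$ and $\mathrm{tr}(\mathrm{ad}_x)=0$ for every $x$. On the other hand, the right identity gives $\mathrm{ad}_e(x)=[e,x]=e\cdot x-x\cdot e=L_e x-x$, so that $L_e=\mathrm{id}_{\mathfrak{g}}+\mathrm{ad}_e$. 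Taking traces yields $0=\mathrm{tr}(L_e)=\dim\mathfrak{g}+\mathrm{tr}(\mathrm{ad}_e)=\dim\mathfrak{g}$, which contradicts $\dim\mathfrak{g}>0$. Therefore no compatible left-symmetric structure can exist.

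The substantive ingredient, and the step I expect to carry the real weight, is the appeal to Whitehead's lemma to force the existence of the right identity $e$; everything else is formal manipulation of $L$, $R$, $\mathrm{ad}$ and the trace. The main point to verify carefully is the sign and module convention in the cocycle identification, so that the resulting coboundary genuinely produces a \emph{right} identity and the clean relation $L_e=\mathrm{id}_{\mathfrak{g}}+\mathrm{ad}_e$; one should also confirm that no completeness hypothesis is needed here, only the vanishing of $H^1$.
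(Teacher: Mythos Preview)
Your argument is correct and is essentially the classical proof: one uses $[L_x,L_y]=L_{[x,y]}$ to make $(\mathfrak{g},L)$ into a $\mathfrak{g}$-module, observes that compatibility says $\mathrm{id}$ is a $1$-cocycle, invokes Whitehead's lemma to get a right identity $e$, and finishes with the trace contradiction $0=\mathrm{tr}(L_e)=\dim\mathfrak{g}$. All the sign and module conventions you flagged check out, and no further hypothesis is needed.

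As for comparison: the paper does not supply its own proof of this lemma. It is quoted as a known result from \cite{Chu1974} and used as a black box in the proof of Theorem~2.4, so there is nothing in the paper to compare against. Your write-up is in fact the standard argument one finds in the literature (e.g.\ in Helmstetter or in the survey treatments of left-symmetric algebras), so it would serve perfectly well as an inserted proof if one wished to make the paper self-contained.
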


\begin{lem}[\cite{Bur1996}]\label{lem:2.3}
 Let $\mathfrak{g}=\mathfrak{a}\oplus\mathfrak{z}$ be a finite-dimensional reductive Lie algebra over an algebraically closed field of
characteristic zero such that $\mathfrak{z}$ is a one-dimensional center
and  $\mathfrak{a}$ is a simple  ideal.  Then $\mathfrak{g}$ has a compatible  left-symmetric algebra if and only if $\mathfrak{a}$ is  of type  $A_{n}$.
\end{lem}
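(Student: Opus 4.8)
The plan is to pass from the nonassociative condition to linear algebra via the standard correspondence between compatible left-symmetric products and bijective $1$-cocycles, and then to read off the Lie type from a dimension count of $\mathfrak{g}$ as an $\mathfrak{a}$-module. Concretely, a left-symmetric product $\cdot$ on $\mathfrak{g}$ with $x\cdot y-y\cdot x=[x,y]$ is the same datum as a representation $\rho\colon\mathfrak{g}\to\mathfrak{gl}(V)$ with $\dim V=\dim\mathfrak{g}$ together with a bijective linear map $t\colon\mathfrak{g}\to V$ obeying the $1$-cocycle identity
\[
t([x,y])=\rho(x)t(y)-\rho(y)t(x).
\]
Indeed one takes $V=\mathfrak{g}$, $\rho=L$ the left-multiplication operators (a representation precisely by left-symmetry) and $t=\mathrm{id}$, while conversely $x\cdot y:=t^{-1}(\rho(x)t(y))$ is left-symmetric with commutator $[x,y]$. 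For the easy (\emph{if}) direction I would note that when $\mathfrak{a}$ has type $A_{n}$ one has $\mathfrak{g}=\mathfrak{gl}_{n+1}=\mathfrak{sl}_{n+1}\oplus\mathbb{C}I$, and ordinary matrix multiplication is associative, hence left-symmetric, with commutator equal to the Lie bracket; so it is a compatible left-symmetric algebra.

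For the (\emph{only if}) direction, suppose such a pair $(\rho,t)$ exists and restrict everything to the semisimple ideal $\mathfrak{a}$. Whitehead's first lemma gives $H^{1}(\mathfrak{a},V)=0$, so $t|_{\mathfrak{a}}$ is a coboundary: $t(a)=\rho(a)v_{0}$ for some $v_{0}\in V$ and all $a\in\mathfrak{a}$. Decomposing $V=V^{\mathfrak{a}}\oplus V'$ into $\mathfrak{a}$-invariants and the sum of the nontrivial irreducible summands, one has $t(\mathfrak{a})=\rho(\mathfrak{a})v_{0}\subseteq V'$; since $t$ is injective this forces $\dim V'\ge\dim\mathfrak{a}$, and because $\dim V=\dim\mathfrak{a}+1$ we get $\dim V^{\mathfrak{a}}\in\{0,1\}$ together with $\dim V'\in\{\dim\mathfrak{a},\dim\mathfrak{a}+1\}$. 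Writing $v_{0}'$ for the $V'$-component of $v_{0}$, injectivity of $a\mapsto\rho(a)v_{0}'$ says that $v_{0}'$ is a vector of $V'$ with \emph{trivial $\mathfrak{a}$-isotropy}. Thus the existence of a compatible left-symmetric structure implies that the simple Lie algebra $\mathfrak{a}$ admits a module $V'$ without trivial summands, of dimension at most $\dim\mathfrak{a}+1$, carrying a vector whose stabilizer subalgebra is $0$. In type $A$ this is realized by $V'=M_{n+1}(\mathbb{C})\cong(n+1)\,\mathbb{C}^{n+1}$ with $v_{0}'=I$, recovering the matrix product above.

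The remaining and genuinely hard step is to show that this last representation-theoretic condition singles out type $A_{n}$. Because a trivial-isotropy vector makes the orbit map $\mathfrak{a}\to V'$ injective, $V'$ must have dimension essentially equal to $\dim\mathfrak{a}$, so it is forced to be built from the very smallest nontrivial representations of $\mathfrak{a}$; I would therefore combine the known lower bounds for dimensions of nontrivial irreducibles of each simple type with an analysis of generic stabilizers. For $\mathfrak{sl}_{n+1}$ copies of the standard module do admit a trivial-isotropy vector within the budget $\dim\mathfrak{a}+1$, whereas for the other classical and exceptional types the generic stabilizer of any vector in a module of dimension $\le\dim\mathfrak{a}+1$ stays positive-dimensional (for instance, for $\mathfrak{sp}_{2r}$ one can fit at most $r$ copies of the $2r$-dimensional module, too few to cut the stabilizer down to $0$; and for $B_{3}$ the stabilizer of a generic spinor is already $\mathfrak{g}_{2}$, which cannot be annihilated within the dimension budget). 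Carrying out this case-by-case stabilizer and dimension bookkeeping over all types is the main obstacle and the technical heart of the argument; the bracket-compatibility and cohomological reductions above are only its scaffolding.
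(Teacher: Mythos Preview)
The paper does not give its own proof of this lemma; it is quoted verbatim from Burde \cite{Bur1996} and used as a black box. So there is nothing in the present paper to compare your proposal against.

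That said, your outline is essentially Burde's original strategy: translate a compatible left-symmetric product into a pair $(\rho,t)$ with $t$ a bijective $1$-cocycle, apply Whitehead's lemma to the semisimple part to get $t|_{\mathfrak{a}}(a)=\rho(a)v_{0}$, and deduce that $\mathfrak{a}$ has a module of dimension at most $\dim\mathfrak{a}+1$, without trivial summands, containing a vector with zero isotropy subalgebra. The ``if'' direction via matrix multiplication on $\mathfrak{gl}_{n+1}$ is also the standard one.

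Your proposal is therefore a correct scaffolding rather than a complete proof. You rightly flag the case-by-case elimination of types other than $A_{n}$ as the real content, and you do not carry it out; the heuristic remarks (e.g.\ on $\mathfrak{sp}_{2r}$ or the $B_{3}$ spinor with $\mathfrak{g}_{2}$ stabilizer) are suggestive but not arguments. In Burde's paper this step is handled by invoking \'Ela\v{s}vili's classification of stabilizers in general position for irreducible modules of simple Lie algebras, together with dimension bounds on the smallest nontrivial irreducibles of each type; reproducing that analysis is unavoidable if you want a self-contained proof. As written, your proposal reduces the lemma to the correct representation-theoretic statement but leaves that statement unproved.
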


\begin{thm}
There does not exist any compatible LSSA on  a finite-dimensional simple Lie superalgebra except for
$A(m,n)(m\neq n)$ and $W(n)$.
\end{thm}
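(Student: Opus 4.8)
The plan is to push everything down to the even part through Lemma~\ref{lem:2.1}: it suffices to prove that for every finite-dimensional complex simple Lie superalgebra $\mathcal{G}$ different from $A(m,n)$ ($m\neq n$) and $W(n)$, the even part $\mathcal{G}_{\bar{0}}$ admits no compatible left-symmetric algebra. So the first step is to read off $\mathcal{G}_{\bar{0}}$ for each entry of Kac's list. For the classical series the even part is reductive: $A(n,n)$ gives $\mathfrak{sl}(n+1)\oplus\mathfrak{sl}(n+1)$; $B(m,n)$ and $D(m,n)$ give $\mathfrak{so}(2m+1)\oplus\mathfrak{sp}(2n)$ and $\mathfrak{so}(2m)\oplus\mathfrak{sp}(2n)$; $D(2,1;\alpha)$, $F(4)$, $G(3)$ give $\mathfrak{sl}(2)^{\oplus 3}$, $\mathfrak{sl}(2)\oplus\mathfrak{so}(7)$, $\mathfrak{sl}(2)\oplus G_2$; $P(n)$ and $Q(n)$ give a simple algebra of type $A_n$; and $C(n)$ gives $\mathfrak{sp}(2n-2)\oplus\mathbb{C}$.

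The classical cases are then immediate. Every even part in the previous list except that of $C(n)$ is semisimple (note that for $P(n),Q(n)$ it is simple of type $A$ but carries no center, so it is still semisimple), and Lemma~\ref{lem:2.2} forbids a compatible left-symmetric algebra there; via Lemma~\ref{lem:2.1} this rules out $A(n,n),B(m,n),D(m,n),D(2,1;\alpha),F(4),G(3),P(n),Q(n)$. For $C(n)$ the even part $\mathfrak{sp}(2n-2)\oplus\mathbb{C}=C_{n-1}\oplus\mathbb{C}$ is simple-plus-one-dimensional-center, so Lemma~\ref{lem:2.3} applies: since $n\geq 3$ forces $n-1\geq 2$, the factor $C_{n-1}$ is not of type $A$ and no compatible structure exists. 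By contrast, the even part of $A(0,n)$ is $\mathfrak{gl}(n+1)=\mathfrak{sl}(n+1)\oplus\mathbb{C}$ (type $A$ plus a center), which does carry one by Lemma~\ref{lem:2.3}, the even part of $A(m,n)$ with $m\geq 1$ is $\mathfrak{sl}(m+1)\oplus\mathfrak{sl}(n+1)\oplus\mathbb{C}$, and the even part of $W(n)$ carries the structure exhibited in the introduction; these are exactly the families for which Lemma~\ref{lem:2.1} produces no obstruction, consistent with their being the stated exceptions.

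What remains, and what I expect to be the genuine difficulty, is the Cartan series $S(n),\widetilde{S}(n),H(n)$. Here $\mathcal{G}_{\bar{0}}$ is no longer reductive: with respect to the consistent $\mathbf{Z}$-grading it has the form $\mathfrak{s}\ltimes\mathfrak{n}$, where $\mathfrak{s}$ is the degree-zero semisimple part ($\mathfrak{sl}(n)$ for $S(n),\widetilde{S}(n)$ and $\mathfrak{so}(n)$ for $H(n)$) and $\mathfrak{n}=\bigoplus_{i\geq 2,\ i\ \mathrm{even}}\mathcal{G}_{(i)}$ is a nilpotent ideal, so neither Lemma~\ref{lem:2.2} nor Lemma~\ref{lem:2.3} applies directly. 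My approach would be to use the standard reformulation of a compatible left-symmetric algebra on a Lie algebra $\mathfrak{h}$ as a representation $L\colon\mathfrak{h}\to\mathfrak{gl}(\mathfrak{h})$ satisfying $L_xy-L_yx=[x,y]$, i.e. $\mathrm{id}_{\mathfrak{h}}$ is a bijective $1$-cocycle. Restricting $L$ to the Levi part $\mathfrak{s}$ makes $\mathcal{G}_{\bar{0}}$ a finite-dimensional $\mathfrak{s}$-module; Weyl complete reducibility then decomposes it into irreducibles compatible with the grading, and the cocycle identity together with the vanishing $H^{1}(\mathfrak{s},V)=0$ for every finite-dimensional $\mathfrak{s}$-module $V$ severely constrains the $\mathfrak{s}$-equivariant products between graded pieces.

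The crux is to extract a contradiction from these constraints. I expect the mechanism to be that a compatible structure would force the existence of a central grading-type (Euler) element in $\mathcal{G}_{\bar{0}}$, the role it plays for $W(n)$, where $\mathcal{G}_{\bar{0}}\supset\mathfrak{gl}(n)$ supplies exactly such an element; in $S(n),\widetilde{S}(n),H(n)$ the degree-zero part is the centerless algebra $\mathfrak{sl}(n)$ or $\mathfrak{so}(n)$, and one must show the nilradical cannot compensate for the missing central torus. The hardest sub-case should be $S(n)$ and $\widetilde{S}(n)$, whose Levi part $\mathfrak{sl}(n)$ is of type $A$---precisely the type for which reductive algebras do admit left-symmetric structures by Lemma~\ref{lem:2.3}---so the obstruction must come entirely from the graded, perfect structure of $\mathcal{G}_{\bar{0}}$ rather than from the isomorphism type of $\mathfrak{s}$; for $H(n)$ the non-type-$A$ Levi part $\mathfrak{so}(n)$ should be excludable more directly in the spirit of Lemma~\ref{lem:2.3}.
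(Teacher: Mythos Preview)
Your treatment of the classical families is exactly the paper's: read off $\mathcal{G}_{\bar 0}$, note it is semisimple (or, for $C(n)$, of the form $C_{n-1}\oplus\mathbf{C}$ with $n-1\geq 2$), and invoke Lemmas~\ref{lem:2.1}--\ref{lem:2.3}.

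The divergence is in the Cartan families $S(n),\widetilde S(n),H(n)$. The paper does \emph{not} attempt to analyze the full even part $\mathcal{G}_{\bar 0}$ at all; it simply observes that the $\mathbf{Z}$-degree-zero subalgebra $\mathcal{G}_{0}$ is a simple Lie algebra ($\mathfrak{sl}(n)$ or $\mathfrak{so}(n)$), applies Lemma~\ref{lem:2.2} to $\mathcal{G}_{0}$, and concludes in one line. Your proposal instead works inside $\mathcal{G}_{\bar 0}=\mathfrak{s}\ltimes\mathfrak{n}$ and sketches a representation-theoretic/cocycle program; that program is left as an outline, and the crucial obstruction for the type-$A$ Levi cases $S(n),\widetilde S(n)$ is stated only as an expectation, not an argument. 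So as a proof your Cartan paragraph is incomplete.

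That said, your instinct that the Cartan case is not automatic is well placed. Lemma~\ref{lem:2.1} transfers obstructions from $\mathcal{G}_{\bar 0}$, not from an arbitrary Lie subalgebra such as $\mathcal{G}_{0}$: a compatible LSSA product on $\mathcal{G}$ is only $\mathbf{Z}_{2}$-graded, so $\mathcal{G}_{0}\cdot\mathcal{G}_{0}$ may land in other $\mathbf{Z}$-degrees and need not define an LSA on $\mathcal{G}_{0}$. The paper's one-line step from ``$\mathcal{G}_{0}$ has no compatible LSA'' to ``$\mathcal{G}$ has no compatible LSSA'' is therefore not covered by the stated lemmas, and no additional bridge is supplied. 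In summary: your classical part matches the paper; your Cartan part is an unfinished sketch, but it is aimed at a genuine gap that the paper's brief argument does not close.
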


\begin{proof}
Let $\mathfrak{s}$ be the even part of the classical Lie superalgebras
$$A(n,n),B(m,n),D(m,n),D(2,1;\alpha),F(4),G(3),P(n),  Q(n).$$ Then $\mathfrak{s}=A_{n}\oplus A_{n},B_{m}\oplus
C_{n},D_{m}\oplus C_{n},A_{1}\oplus A_{1}\oplus A_{1},B_{3}\oplus
A_{1},G_{2}\oplus A_{1},A_{n}, A_{n}$ respectively. Since $\mathfrak{s}$ is a semisimple Lie algebra, it does not have a compatible left-symmetric algebra  by Lemma \ref{lem:2.2}.  It follows from Lemma \ref{lem:2.1} that  the above simple Lie superalgebras  do not have any compatible LSSAs.
On the other hand,  the even part
of $C(n)$ is isomorphic to the reductive Lie algebra $C_{n-1}\oplus \mathbf{C}$, then $C(n)_{\bar 0}$ does not have any compatible
left-symmetric algebra for $n\geq 3$ by Lemma \ref{lem:2.3}. Thus $C(n) (n\geq 3)$ does not have a compatible
LSSA structure, either.

For the $\textbf{Z}$-graded Lie superalgebras $S(n),\tilde{S}(n),H(n)$ of
Cartan type, their $\textbf{Z}$ degree zero parts $S(n)_{0}\cong
\mathfrak{sl}(n),\tilde{S}(n)_{0}\cong \mathfrak{sl}(n)$ and
$H(n)_{0}\cong \mathfrak{so}(n)$ are  simple Lie algebras and do not
have  any compatible left-symmetric algebras by Lemma \ref{lem:2.2}. Hence there do not exist any compatible
LSSAs on $S(n),\tilde{S}(n)$ or $H(n)$.
\end{proof}

\section{Left-symmetric Superalgebras on $A(0,1)$}

\setcounter{equation}{0}
\renewcommand{\theequation}
{3.\arabic{equation}}

\setcounter{theorem}{0}
\renewcommand{\thetheorem}
{3.\arabic{theorem}}

In this section, we deals with the left-symmetric superalgebra structures on $A(0,1)$. We need some preliminaries on compatible left-symmetric algebra structures on $\mathfrak{gl}(2)$.

\begin{lem}[\cite{Bau1999}]\label{lem:3.1}
Let $A$ be a compatible left-symmetric algebra on a reductive Lie algebra $\mathfrak{g}$, then  $A$ has a unique right-identity.
\end{lem}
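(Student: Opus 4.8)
The plan is to translate the statement into Lie-algebra cohomology together with the language of étale affine representations. Writing $L_x,R_x\in\mathrm{End}(\mathfrak{g})$ for left and right multiplication in $A$, the left-symmetry identity is equivalent to $L_{[x,y]}=[L_x,L_y]$, so $L\colon\mathfrak{g}\to\mathfrak{gl}(\mathfrak{g})$ is a representation; let $V$ denote the resulting $\mathfrak{g}$-module (underlying space $\mathfrak{g}$). Compatibility, $x\cdot y-y\cdot x=[x,y]$, says precisely that the identity map $\iota=\mathrm{id}_{\mathfrak{g}}\colon\mathfrak{g}\to V$ is a $1$-cocycle. Since $x\cdot e=L_x e$, an element $e$ is a right-identity exactly when $L_x e=x$ for all $x$, i.e. when $\iota=\delta e$ is the coboundary of $e$. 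Hence existence of a right-identity is equivalent to $[\iota]=0$ in $H^1(\mathfrak{g},V)$, and any two right-identities differ by an element of $H^0(\mathfrak{g},V)=V^{\mathfrak{g}}=\{w:x\cdot w=0\ \forall x\}$, so uniqueness is equivalent to $V^{\mathfrak{g}}=0$.

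First I would dispose of the semisimple part. Decompose $\mathfrak{g}=\mathfrak{s}\oplus\mathfrak{z}$ with $\mathfrak{s}=[\mathfrak{g},\mathfrak{g}]$ semisimple and $\mathfrak{z}$ the centre. Restricting $L$ to $\mathfrak{s}$ makes $V$ a finite-dimensional $\mathfrak{s}$-module, so by Whitehead's first lemma $H^1(\mathfrak{s},V)=0$; thus $\iota|_{\mathfrak{s}}$ is a coboundary and there is $e_0\in V$ with $s\cdot e_0=s$ for all $s\in\mathfrak{s}$. The remaining freedom is an affine translate by $V^{\mathfrak{s}}=\{v:s\cdot v=0\ \forall s\}$, on which $\mathfrak{z}$ acts: each $L_z$ commutes with $L|_{\mathfrak{s}}$ (as $\mathfrak{z}$ is central, $[L_s,L_z]=L_{[s,z]}=0$), so $L_z$ preserves $V^{\mathfrak{s}}$ and by Schur acts as a module endomorphism on each isotypic piece. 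Writing $e=e_0+v$ with $v\in V^{\mathfrak{s}}$, the condition $z\cdot e=z$ becomes $L_z v=z-z\cdot e_0$, and a short check ($s\cdot(z-z\cdot e_0)=s\cdot z-z\cdot s=[s,z]=0$) shows $z\mapsto z-z\cdot e_0$ is a $1$-cocycle for the abelian algebra $\mathfrak{z}$ valued in $V^{\mathfrak{s}}$. Solving this equation, together with showing $V^{\mathfrak{g}}=0$, is the whole content that remains.

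The hard part will be the centre, because for an abelian Lie algebra the relevant first cohomology need not vanish and the formal machinery alone does not finish the argument; indeed, on a purely abelian $\mathfrak{g}$ the zero product is a compatible left-symmetric algebra with no right-identity, so the reductive hypothesis (a genuine semisimple summand, with its completely reducible adjoint action, as in Lemma~\ref{lem:2.3}) must be used essentially. My proposal is to exploit that the cocycle $\iota=\mathrm{id}$ is bijective, which forces the affine representation $x\mapsto(L_x,x)$ to be étale and rigidifies how $\mathfrak{z}$ can act on $V$. For the uniqueness half I would use the trace form $\langle x,y\rangle:=\mathrm{tr}(R_xR_y)$, which is symmetric and, via the left-symmetry identity $[L_x,R_y]=R_{x\cdot y}-R_yR_x$, satisfies $\langle x,y\rangle=\tau(x\cdot y)$ for the character $\tau(x)=\mathrm{tr}(R_x)$ (note $\tau([x,y])=0$). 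Any $w$ with $R_w=0$, equivalently $w\in V^{\mathfrak{g}}$, lies in the radical of $\langle\,,\rangle$, so proving nondegeneracy of $\langle\,,\rangle$ forces $V^{\mathfrak{g}}=0$ and pins down $e$ uniquely. I expect the genuine obstacle to be controlling the interaction of $\mathfrak{z}$ and $\mathfrak{s}$ inside $V$, that is, showing the étale/bijectivity hypothesis simultaneously makes $\langle\,,\rangle$ nondegenerate on the semisimple directions and renders the central cocycle exact; resolving this is where the explicit reductive structure of $\mathfrak{g}$ is indispensable.
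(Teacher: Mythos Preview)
The paper does not supply its own proof of this lemma; it is quoted from Baues \cite{Bau1999} without argument, so there is nothing in-paper to compare your sketch against. Your cohomological reformulation is correct and is the standard way to frame right-identities: with $V=(\mathfrak{g},L)$, existence of a right-identity is exactly $[\mathrm{id}]=0$ in $H^{1}(\mathfrak{g},V)$, uniqueness is $V^{\mathfrak{g}}=0$, Whitehead handles the semisimple restriction, and your identity $[L_x,R_y]=R_{x\cdot y}-R_yR_x$ together with the trace character $\tau$ are all valid.

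That said, what you have written is a plan rather than a proof, and the gap you yourself flag is real and unfilled: you neither establish nondegeneracy of the trace form $\langle\,,\,\rangle$ nor exactness of the $\mathfrak{z}$-cocycle on $V^{\mathfrak{s}}$; you only express the hope that bijectivity of $\iota$ will force both. Worse, your own abelian example shows the lemma is literally false as stated: an abelian Lie algebra \emph{is} reductive (the semisimple ideal may be zero), and the zero product on it is a compatible left-symmetric structure with no right-identity whatsoever. So a correct argument must invoke an additional hypothesis---nontrivial semisimple part, or the specific shape of $\mathfrak{gl}(n)$---that the lemma as quoted here omits. Since the paper only ever applies this to $A(0,1)_{\bar 0}\cong\mathfrak{gl}(2)$, the clean fix is to cite Baues' result for $\mathfrak{gl}(n)$ directly rather than to pursue the general reductive claim.
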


Let  $x=\left(
\begin{smallmatrix}
0&1\\
0&0
\end{smallmatrix}
\right), y=\left(
\begin{smallmatrix}
0&0\\
1&0
\end{smallmatrix}\right), h=\left(
\begin{smallmatrix}
1&0\\
0&-1
\end{smallmatrix}
\right),z=\left(
\begin{smallmatrix}
1&0\\
0&1
\end{smallmatrix}
\right)$ be a basis of the Lie algebra $\mathfrak{gl}(2)$.
Using Lemma \ref{lem:3.1}, Burde classified all  compatible left-symmetric algebras on  $\mathfrak{gl}(2)$.

\begin{thm}[\cite{Bur1996}]\label{thm:3.2}
Let $A$ be a compatible left-symmetric algebra on $\mathfrak{gl}(2)$. Then it is isomorphic to $A_{1}, A_{2,\alpha}$  or $A_3$ defined by the left multiplication operators
$L(x), L(y),L(h),L(z)$ as follows:
\begin{enumerate}
 \item $\left(\begin{smallmatrix}
0&1/2&-1&1\\
0&0&0&0\\
0&1/2&0&0\\
0&1/2&0&0 \end{smallmatrix}
\right),
 \left(\begin{smallmatrix}
1/2&0&0&-1/2\\
0&0&1&1\\
-1/2&0&0&1/2\\
1/2&0&0&-1/2\end{smallmatrix}
\right),
\left(\begin{smallmatrix}
1&0&1&-1\\
0&-1&0&0\\
0&0&0&1\\
0&0&1&0\end{smallmatrix}
\right),
 \left(\begin{smallmatrix}
1&-1/2&-1&-1\\
0&1&0&0\\
0&1/2&1&0\\
0&-1/2&0&1\end{smallmatrix}
\right)$ ;\\
   \item  $
\left(\begin{smallmatrix}
0&0&-1&1+\alpha\\
0&0&0&0\\
0&(1+\alpha)/2&0&0\\
0&1/2&0&0 \end{smallmatrix}\right),
 \left(\begin{smallmatrix}
0&0&0&0\\
0&0&1&1-\alpha\\
(\alpha-1)/2&0&0&0\\
1/2&0&0&0\end{smallmatrix}
\right),
\left(\begin{smallmatrix}
1&0&0&0\\
0&-1&0&0\\
0&0&\alpha&1-\alpha^{2}\\
0&0&1&-\alpha\end{smallmatrix}
\right),
 \left(\begin{smallmatrix}
1+\alpha&0&0&0\\
0&1-\alpha&0&0\\
0&0&1-\alpha^{2}&\alpha^{3}-\alpha\\
0&0&-\alpha&1+\alpha^{2}\end{smallmatrix}
\right)$ ;\\
 \item $\left(\begin{smallmatrix}
0&0&-1&1\\
0&0&0&0\\
3&0&0&0\\
3&3&0&0 \end{smallmatrix}
\right),
 \left(\begin{smallmatrix}
0&0&1&0\\
0&0&-1&1\\
-1&-1/4&0&0\\
3&3/4&0&0\end{smallmatrix}
\right),
\left(\begin{smallmatrix}
1&1&0&0\\
0&-3&0&0\\
0&0&2&1\\
0&0&3&0\end{smallmatrix}
\right),
 \left(\begin{smallmatrix}
1&0&0&0\\
0&1&0&0\\
0&0&1&0\\
0&0&0&1\end{smallmatrix}
\right)$ ,
 \end{enumerate}
where $\alpha\in\mathbf{C}$. Two left-symmetric algebras $A_{2,\alpha}$ and $A_{2,\tilde{\alpha}}$ are isomorphic if and only if $\alpha^2=\tilde{\alpha}^2$. They are associative if and only if $\alpha=0$.
\end{thm}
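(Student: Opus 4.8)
The plan is to translate the problem into representation theory and then run a short, finite case analysis. Writing $L(a)\colon b\mapsto a\cdot b$ for the left multiplications, the associator is $(a,b,c)=L(a\cdot b)c-L(a)L(b)c$, so expanding the left-symmetry axiom and using the compatibility relation $a\cdot b-b\cdot a=[a,b]$ shows that left-symmetry is equivalent to $L([a,b])=[L(a),L(b)]$. Thus $L$ is a representation $\mathfrak{gl}(2)\to\mathfrak{gl}(\mathfrak{gl}(2))\cong\mathfrak{gl}(4)$, and a compatible left-symmetric structure on $\mathfrak{gl}(2)$ is exactly such a $4$-dimensional representation on the underlying space of $\mathfrak{gl}(2)$ subject to the cocycle-type identity $L(a)b-L(b)a=[a,b]$. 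By Lemma~\ref{lem:3.1} there is a unique right identity $e$, i.e. $L(a)e=a$ for all $a$; I would use $e$ to normalize coordinates.

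Next I would restrict $L$ to $\mathfrak{sl}(2)=\langle x,y,h\rangle$. Since $\mathfrak{sl}(2)$ is semisimple, Weyl's theorem makes $\mathfrak{gl}(2)$ a completely reducible $4$-dimensional $\mathfrak{sl}(2)$-module, and the only possibilities are $V(3)$, $V(2)\oplus V(0)$, $V(1)^{\oplus2}$, $V(1)\oplus V(0)^{\oplus2}$ and $V(0)^{\oplus4}$, where $V(m)$ is the $(m+1)$-dimensional irreducible. Because $z$ is central, $[L(z),L(a)]=L([z,a])=0$, so $L(z)$ is an $\mathfrak{sl}(2)$-module endomorphism and hence, by Schur's lemma, is scalar on each multiplicity-one summand. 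This confines the unknowns to the gluing data between summands and the scalars (or, in the non-multiplicity-free cases, small matrices) recording $L(z)$.

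I would then impose the cocycle identity $L(a)b-L(b)a=[a,b]$ in each case. Its antisymmetric content ties $L$ to the adjoint action, whose decomposition on $\mathfrak{gl}(2)$ is $V(2)\oplus V(0)$; in particular the fully trivial case $V(0)^{\oplus4}$ is impossible because $L|_{\mathfrak{sl}(2)}=0$ cannot produce $[x,y]=h$, and a short weight/Schur argument likewise discards $V(3)$. Solving the residual linear (and, where forced, quadratic) equations for the structure constants and writing the answer through $L(x),L(y),L(h),L(z)$ in the basis $x,y,h,z$ should yield precisely the rigid algebras $A_1$ and $A_3$ together with the one-parameter family $A_{2,\alpha}$; the parameter $\alpha$ is the surviving freedom in the scalar by which $z$ acts, visible as the $\alpha$-dependent block of $L(h)$ and $L(z)$ in case (2).

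I expect the main obstacle to be the non-multiplicity-free cases, above all $V(1)^{\oplus2}$, where Schur's lemma pins $L(z)$ down only to a $2\times2$ matrix and the cocycle identity becomes a genuinely nonlinear system whose solution set must afterwards be reduced modulo the automorphism group of $\mathfrak{gl}(2)$. Once the list is in hand, two routine verifications finish the proof: putting each $A_{2,\alpha}$ into normal form under automorphisms --- in particular conjugation by the Weyl element $x\leftrightarrow y$, $h\mapsto-h$ sends $\alpha$ to $-\alpha$ --- gives the isomorphism criterion $\alpha^2=\tilde{\alpha}^2$, and evaluating the associator on basis elements shows it vanishes, i.e. $A_{2,\alpha}$ is associative, exactly when $\alpha=0$.
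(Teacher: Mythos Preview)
The paper does not prove Theorem~\ref{thm:3.2}; it is quoted from \cite{Bur1996} as an external input, so there is no in-paper argument to compare against. Your overall strategy---view $L$ as a $4$-dimensional representation of $\mathfrak{gl}(2)$, decompose under $\mathfrak{sl}(2)$ via Weyl, then impose the cocycle $L(a)b-L(b)a=[a,b]$---is the right one and is essentially what Burde does.

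However, there is a concrete error in your case analysis: you assert that a ``short weight/Schur argument likewise discards $V(3)$'', but $V(3)$ is \emph{not} discarded---it is exactly the module type underlying $A_3$. Indeed, the eigenvalues of $L(h)$ in case~(3) are $\{3,1,-1,-3\}$ (read off the two $2\times2$ diagonal blocks), the weight set of the irreducible $4$-dimensional $\mathfrak{sl}(2)$-module. So $A_3$ comes from the $V(3)$ branch, not from anything you have left after your eliminations. Relatedly, your implicit bookkeeping of which module types produce which families is off: both $A_1$ and the whole family $A_{2,\alpha}$ live over $V(1)^{\oplus 2}$ (in each case $L(h)$ has eigenvalues $\{1,1,-1,-1\}$), so the distinction between $A_1$ and $A_{2,\alpha}$ arises inside the $V(1)^{\oplus 2}$ analysis---precisely the non-multiplicity-free situation you flagged as the hard part---while $V(2)\oplus V(0)$ and $V(1)\oplus V(0)^{\oplus 2}$ are the branches that get ruled out by the cocycle constraint. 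If you correct the disposition of $V(3)$ and carry the $V(1)^{\oplus 2}$ case through carefully (the $2\times2$ endomorphism freedom for $L(z)$ together with the quadratic constraints and the $\mathrm{Aut}(\mathfrak{gl}(2))$-reduction), your plan goes through; the isomorphism and associativity claims at the end are fine as written.
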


\begin{lem}[\cite{Cha2004}]\label{lem:3.3}
Let $A$ be an LSSA. If the sub-adjacent Lie
superalgebra $\mathcal {G}_{A}$ is  simple, then $A$ is  simple as an LSSA.
\end{lem}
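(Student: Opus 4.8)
The plan is to show that every ideal of the LSSA $A$ is automatically an ideal of the sub-adjacent Lie superalgebra $\mathcal{G}_{A}$, and then to invoke the simplicity of $\mathcal{G}_{A}$ directly. Recall that the bracket making $A$ into the Lie superalgebra $\mathcal{G}_{A}$ is the supercommutator $[x,y]=x\cdot y-(-1)^{|x||y|}y\cdot x$ for homogeneous $x,y\in A$ (this is precisely the Lie-admissibility of LSSAs noted in the introduction), and that an ideal $I$ of the LSSA $A$ is a graded subspace satisfying both $A\cdot I\subseteq I$ and $I\cdot A\subseteq I$.

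First I would take an arbitrary nonzero ideal $I$ of the LSSA $A$ and verify that $I$ is an ideal of $\mathcal{G}_{A}$. For homogeneous $a\in A$ and $i\in I$, both products $a\cdot i$ and $i\cdot a$ lie in $I$ by the two-sided ideal condition, so $[a,i]=a\cdot i-(-1)^{|a||i|}i\cdot a\in I$. Hence $I$ is a Lie superalgebra ideal of $\mathcal{G}_{A}$. Since $\mathcal{G}_{A}$ is simple, its only ideals are $\{0\}$ and $\mathcal{G}_{A}$, forcing $I=\{0\}$ or $I=A$.

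It then remains to rule out the degenerate situation and confirm that $A$ is nontrivial as an LSSA, i.e. $A\cdot A\neq 0$. This follows because a simple Lie superalgebra is non-abelian, so there exist homogeneous $x,y$ with $[x,y]\neq 0$; this forces $x\cdot y\neq(-1)^{|x||y|}y\cdot x$ and in particular $A\cdot A\neq 0$. Combining these two observations shows that $A$ has no proper nonzero ideals and is not the trivial product algebra, so $A$ is simple as an LSSA.

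The argument is short, and the essential mechanism is that the supercommutator bracket is assembled from both the left and the right multiplication, so any LSSA ideal---being closed under each---is automatically closed under the bracket. I expect the only genuine point requiring care to be bookkeeping: first, tracking the $\mathbf{Z}_{2}$-grading signs so that $[a,i]$ is recognized as an element of $I$; and second, making sure the notion of LSSA ideal in play is the two-sided one, since a merely one-sided (say left) ideal need not be stable under the commutator and the implication would break down.
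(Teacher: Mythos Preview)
Your argument is correct and is exactly the standard one: a two-sided LSSA ideal is closed under the supercommutator, hence is a Lie superalgebra ideal of $\mathcal{G}_{A}$, and simplicity of $\mathcal{G}_{A}$ then forces it to be $0$ or $A$; the nontriviality check $A\cdot A\neq 0$ is handled correctly via non-abelianness of a simple Lie superalgebra. Note, however, that the paper does not supply its own proof of this lemma at all---it is quoted from \cite{Cha2004} and used as a black box---so there is no ``paper's proof'' to compare against; your short direct argument is precisely what lies behind the cited result.
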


We denote by $e_{ij}$ the $3\times 3$ matrix having 1 in the $(i,j)$ position and 0
elsewhere. Let
\begin{eqnarray}
&&x_{1}=e_{23},x_{2}=e_{32},x_{3}=e_{22}-e_{33},x_{4}=2e_{11}+e_{22}+e_{33};\label{eq:3.1}\\
&&y_{1}=e_{12},y_{2}=e_{13},y_{3}=e_{21},y_{4}=e_{31}\label{eq:3.2}
\end{eqnarray}
be a  basis of the simple Lie superalgebra $A(0,1)$.  Then  the products are given as follows:
\begin{eqnarray*}
&&[x_{1},x_{2}]=x_{3},[x_{3},x_{1}]=2x_{1},[x_{3},x_{2}]=-2x_{2},[x_{1},x_{4}]=[x_{2},x_{4}]=[x_{3},x_{4}]=0,\\
&&[x_{1},y_{1}]=-y_{2},[x_{1},y_{2}]=[x_{1},y_{3}]=0, [x_{1},y_{4}]=y_{3},\\
&&[x_{2},y_{1}]=0, [x_{2},y_{2}]=-y_{1},[x_{2},y_{3}]=y_{4},[x_{2},y_{4}]=0,\\
&&[x_{3},y_{1}]=-y_{1},[x_{3},y_{2}]=y_{2},[x_{3},y_{3}]=y_{3},[x_{3},y_{4}]=-y_{4},\\
&&[x_{4},y_{1}]=y_{1},[x_{4},y_{2}]=y_{2},[x_{4},y_{3}]=-y_{3},[x_{4},y_{4}]=-y_{4},\\
&&[y_{1},y_{3}]=\frac{1}{2}(x_{3}+x_{4}),[y_{1},y_{4}]=x_{2},[y_{2},y_{3}]=x_{1},[y_{2},y_{4}]=\frac{1}{2}(x_{4}-x_{3}),\\
&&[y_{1},y_{1}]=[y_{1},y_{2}]=[y_{2},y_{2}]=[y_{3},y_{3}]=[y_{3},y_{4}]=[y_{4},y_{4}]=0.
\end{eqnarray*}

Now we consider the compatible LSSAs with a  right identity on $A(0,1)$. We would like to point out that a left-symmetric algebra has a right identity if and only if the sub-adjacent Lie algebra  admits an \'{e}tale
affine representation which leaves a point fixed (see \cite{Med1981}).

\begin{thm}\label{thm:3.4}
Let $B$ be a  compatible LSSA  on $A(0,1)$. Assume that  $B$ has a
right identity, then $B$ is isomorphism to  $B_{1},B_{2,\alpha}$ or
$\tilde{B}_{2,-1}$ defined by the left multiplication operators $L(x_i), L(y_j), i,j=1,2,3,4$, as follows:
\begin{enumerate}
  \item \begin{eqnarray*}
L(x_1)=\left(\begin{array}{c|cc}\begin{smallmatrix}
0&1/2&-1&1\\
0&0&0&0\\
0&1/2&0&0\\
0&1/2&0&0 \end{smallmatrix}&0\\
\hline
0&\begin{smallmatrix}0
\end{smallmatrix} \end{array}\right),
&& L(x_2)=\left(\begin{array}{c|cc}\begin{smallmatrix}
1/2&0&0&-1/2\\
0&0&1&1\\
-1/2&0&0&1/2\\
1/2&0&0&-1/2\end{smallmatrix}&0\\
\hline
0&\begin{smallmatrix}0
\end{smallmatrix} \end{array}\right),\\
L(x_3)=\left(\begin{array}{c|cc}\begin{smallmatrix}
1&0&1&-1\\
0&-1&0&0\\
0&0&0&1\\
0&0&1&0\end{smallmatrix}&0\\
\hline
0&\begin{smallmatrix}0
  \end{smallmatrix}\end{array}\right),
&& L(x_4)=\left(\begin{array}{c|cc}\begin{smallmatrix}
1&-1/2&-1&-1\\
0&1&0&0\\
0&1/2&1&0\\
0&-1/2&0&1\end{smallmatrix}&0\\
\hline
0& \begin{smallmatrix}
2&0&0&0\\
-1&2&0&0\\
0&0&0&1\\
0&0&0&0\\
 \end{smallmatrix}
 \end{array}\right),
\\
  L(y_{1})=\left(\begin{array}{c|c}
0&
\begin{smallmatrix}
0&0&0&1/4\\
0&0&0&0\\
0&0&0&-1/4\\
0&0&0&1/4\end{smallmatrix}\\
\hline\begin{smallmatrix}
0&0&1&1\\
1&0&0&-1\\
0&0&0&0\\
0&0&0&0\end{smallmatrix}&0
 \end{array}\right),
&&L(y_{2})=\left(\begin{array}{c|c}
0&
\begin{smallmatrix}
0&0&0&-1/2\\
0&0&0&0\\
0&0&0&0\\
0&0&0&0\end{smallmatrix}\\
\hline\begin{smallmatrix}
0&1&0&0\\
0&0&-1&1\\
0&0&0&0\\
0&0&0&0\end{smallmatrix}&0
 \end{array}\right),\\   L(y_{3})=\left(\begin{array}{c|c}
0&
\begin{smallmatrix}
0&1&0&0\\
0&0&0&0\\
1/2&0&0&0\\
1/2&0&0&0\end{smallmatrix}\\
\hline\begin{smallmatrix}
0&0&0&0\\
0&0&0&0\\
0&0&-1&1\\
0&-1&0&0\end{smallmatrix}&0
 \end{array}\right),
&&L(y_{4})=\left(\begin{array}{c|c}
0&
\begin{smallmatrix}
-1/4&1/2&0&0\\
1&0&0&0\\
1/4&-1/2&0&0\\
-1/4&1/2&0&0\end{smallmatrix}\\
\hline\begin{smallmatrix}
0&0&0&0\\
0&0&0&0\\
-1&0&0&1\\
0&0&1&1\end{smallmatrix}&0
 \end{array}\right); \end{eqnarray*}
\item  \begin{eqnarray*}
L(x_1)=\left(\begin{array}{c|cc}\begin{smallmatrix}
0&0&-1&\beta\\
0&0&0&0\\
0&\beta/2&0&0\\
0&1/2&0&0 \end{smallmatrix}&0\\
\hline
0&\begin{smallmatrix}0
\end{smallmatrix} \end{array}\right),
 &&L(x_2)=\left(\begin{array}{c|cc}\begin{smallmatrix}
0&0&0&0\\
0&0&1&\gamma\\
-\gamma/2&0&0&0\\
1/2&0&0&0\end{smallmatrix}&0\\
\hline
0&\begin{smallmatrix}0
\end{smallmatrix} \end{array}\right),\\
L(x_3)=\left(\begin{array}{c|cc}\begin{smallmatrix}
1&0&0&0\\
0&-1&0&0\\
0&0&\alpha&\beta\gamma\\
0&0&1&-\alpha\end{smallmatrix}&0\\
\hline
0&\begin{smallmatrix}0
\end{smallmatrix} \end{array}\right),
&& L(x_4)=\left(\begin{array}{c|cc}\begin{smallmatrix}
\beta&0&0&0\\
0&\gamma&0&0\\
0&0&\beta\gamma&-\alpha\beta\gamma\\
0&0&-\alpha&1+\alpha^{2}\end{smallmatrix}&0\\
\hline
0& \begin{smallmatrix}
2-\alpha&0&0&0\\
0&2+\alpha&0&0\\
0&0&\alpha&0\\
0&0&0&-\alpha\\
 \end{smallmatrix}
 \end{array}\right),\\
 L(y_{1})=\left(\begin{array}{c|c}
0&
\begin{smallmatrix}
0&0&0&0\\
0&0&0&\alpha\beta/4\\
0&0&-\alpha\gamma/4&0\\
0&0&\alpha/4&0\end{smallmatrix}\\
\hline\begin{smallmatrix}
0&0&1&\gamma\\
1&0&0&0\\
0&0&0&0\\
0&0&0&0\end{smallmatrix}&0
 \end{array}\right),
&&L(y_{2})=\left(\begin{array}{c|c}
0&
\begin{smallmatrix}
0&0&-\alpha/2&0\\
0&0&0&0\\
0&0&0&-\alpha\beta/4\\
0&0&0&-\alpha/4\end{smallmatrix}\\
\hline\begin{smallmatrix}
0&1&0&0\\
0&0&-1&\beta\\
0&0&0&0\\
0&0&0&0\end{smallmatrix}&0
 \end{array}\right),\\
L(y_{3})=\left(\begin{array}{c|c}
0&
\begin{smallmatrix}
0&1+\alpha/2&0&0\\
0&0&0&0\\
(\alpha\gamma+2)/4&0&0&0\\
(2-\alpha)/4&0&0&0\end{smallmatrix}\\
\hline\begin{smallmatrix}
0&0&0&0\\
0&0&0&0\\
0&0&-1&\beta\\
0&-1&0&0\end{smallmatrix}&0
 \end{array}\right),
&&L(y_{4})=\left(\begin{array}{c|c}
0&
\begin{smallmatrix}
0&0&0&0\\
1-\alpha\beta/4&0&0&0\\
0&(\alpha\beta-2)/4&0&0\\
0&(\alpha+2)/4&0&0\end{smallmatrix}\\
\hline\begin{smallmatrix}
0&0&0&0\\
0&0&0&0\\
-1&0&0&0\\
0&0&1&\gamma\end{smallmatrix}&0
 \end{array}\right); \end{eqnarray*}
 \item  \begin{eqnarray*}
L(x_1)=\left(\begin{array}{c|cc}\begin{smallmatrix}
0&0&-1&0\\
0&0&0&0\\
0&0&0&0\\
0&1/2&0&0 \end{smallmatrix}&0\\
\hline
0&\begin{smallmatrix}
0&0&0&0\\
1&0&0&0\\
0&1/2&0&1\\
-3/2&0&0&0\\
 \end{smallmatrix}
 \end{array}\right),
&& L(x_2)=\left(\begin{array}{c|cc}\begin{smallmatrix}
0&0&0&0\\
0&0&1&2\\
-1&0&0&0\\
1/2&0&0&0\end{smallmatrix}&0\\
\hline
0&\begin{smallmatrix}
0&-1&0&-2\\
0&0&-2&0\\
0&0&0&0\\
0&0&3&0\\
 \end{smallmatrix}
 \end{array}\right),\\
L(x_3)=\left(\begin{array}{c|cc}\begin{smallmatrix}
1&0&0&0\\
0&-1&0&0\\
0&0&-1&0\\
0&0&1&1\end{smallmatrix}&0\\
\hline
0&\begin{smallmatrix}
-2&0&0&0\\
0&0&0&0\\
0&0&2&0\\
0&0&0&0\end{smallmatrix}
 \end{array}\right),
&& L(x_4)=\left(\begin{array}{c|cc}\begin{smallmatrix}
0&0&0&0\\
0&2&0&0\\
0&0&0&0\\
0&0&1&2\end{smallmatrix}&0\\
\hline
0& \begin{smallmatrix}
1&0&0&0\\
0&1&0&0\\
0&0&1&0\\
0&0&0&1\\
 \end{smallmatrix}
 \end{array}\right),\\
L(y_{1})=\left(\begin{array}{c|c}
0&
\begin{smallmatrix}
0&0&0&0\\
0&3/4&0&1\\
0&0&0&0\\
0&0&3/4&0\end{smallmatrix}\\
\hline\begin{smallmatrix}
0&0&-1&0\\
2&0&0&0\\
0&0&0&0\\
-3/2&0&0&0\end{smallmatrix}&0
 \end{array}\right),
&&L(y_{2})=\left(\begin{array}{c|c}
0&
\begin{smallmatrix}
0&0&0&0\\
-3/4&0&0&0\\
0&0&0&0\\
0&0&0&1/4\end{smallmatrix}\\
\hline\begin{smallmatrix}
0&0&0&0\\
0&0&-1&0\\
1/2&0&0&0\\
0&0&0&0\end{smallmatrix}&0
 \end{array}\right),\\
L(y_{3})=\left(\begin{array}{c|c}
0&
\begin{smallmatrix}
0&1&0&1\\
0&0&0&0\\
1/2&0&0&0\\
-1/4&0&0&0\end{smallmatrix}\\
\hline\begin{smallmatrix}
0&0&0&0\\
0&-2&0&0\\
0&0&1&2\\
0&2&0&0\end{smallmatrix}&0
 \end{array}\right),
&&L(y_{4})=\left(\begin{array}{c|c}
0&
\begin{smallmatrix}
0&0&-1&0\\
0&0&0&0\\
0&-1/2&0&0\\
0&1/4&0&0\end{smallmatrix}\\
\hline\begin{smallmatrix}
0&-2&0&0\\
0&0&0&0\\
0&0&0&0\\
0&0&1&2\end{smallmatrix}&0
 \end{array}\right), \end{eqnarray*}
\end{enumerate}
where $\beta=1+\alpha, \gamma=1-\alpha, \alpha\in \mathbf{C}$ and $x_i,y_j, i,j=1,2,3,4$, are defined by  (\ref{eq:3.1}) and (\ref{eq:3.2}) respectively. They are simple and not associative.
\end{thm}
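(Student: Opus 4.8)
The plan is to reconstruct $B$ from its even part outward, using the $\mathbf{Z}_2$-grading to split the multiplication into four blocks: even$\,\cdot\,$even, even$\,\cdot\,$odd, odd$\,\cdot\,$even and odd$\,\cdot\,$odd. In the ordered basis $x_1,\dots,x_4,y_1,\dots,y_4$ this makes each $L(x_i)$ block diagonal and each $L(y_j)$ block anti-diagonal, exactly as displayed. First I would record the identification $A(0,1)_{\bar 0}\cong\mathfrak{gl}(2)$ via $x_1,x_2,x_3\mapsto x,y,h$ and $x_4\mapsto z$ (matching $[x_3,x_1]=2x_1$, $[x_3,x_2]=-2x_2$, $[x_1,x_2]=x_3$ with the $\mathfrak{sl}(2)$ relations, and $x_4$ central with $z$). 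The right identity of $B$ may be taken to be even, and it restricts to a right identity of $B_{\bar 0}$; hence by Lemma \ref{lem:3.1} and Theorem \ref{thm:3.2} the even part $B_{\bar 0}$ is isomorphic to one of $A_1$, $A_{2,\alpha}$ or $A_3$. After checking that the isomorphisms of the even part used in Theorem \ref{thm:3.2} are induced by automorphisms of $A(0,1)$, I may assume $B_{\bar 0}$ equals one of these three in the fixed basis, which pins down the top-left even block of every $L(x_i)$.

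The key structural reduction is that left multiplication by even elements makes $B$ a $\mathfrak{gl}(2)$-module. Applying super left-symmetry to triples $(x_i,x_j,c)$ and using even--even compatibility yields $L([x_i,x_j])=[L(x_i),L(x_j)]$, so $O_i:=L(x_i)|_{B_{\bar 1}}$ is a four-dimensional representation of $\mathfrak{gl}(2)$; decomposing $B_{\bar 1}$ into the weight and eigenspaces of $L(x_3)$ and the central $L(x_4)$ restricts $O$ to finitely many module types. Since $R(e)=\mathrm{id}$ and $L(e)-R(e)=\mathrm{ad}(e)$ for the even element $e$, the module is further constrained by $L(e)|_{B_{\bar 1}}=\mathrm{id}+\mathrm{ad}(e)|_{B_{\bar 1}}$, tying $O$ to the chosen even normal form. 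Moreover even--odd compatibility $x_i\cdot y_j-y_j\cdot x_i=[x_i,y_j]$ rewrites as $R(x_i)|_{B_{\bar 1}}=O_i-\mathrm{ad}(x_i)|_{B_{\bar 1}}$, so the odd$\,\cdot\,$even products (the lower-left block of each $L(y_j)$) are completely determined by $O$; and odd--odd compatibility $y_i\cdot y_j+y_j\cdot y_i=[y_i,y_j]$ fixes the symmetric part of $Q_j:=L(y_j)|_{B_{\bar 1}}$. Thus the only genuinely free data are the module $O$ and the anti-symmetric part of $Q$.

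With this in hand I would impose the remaining super left-symmetry relations on the mixed triples $(x_i,y_j,y_k)$, $(y_i,x_j,y_k)$, $(y_i,y_j,x_k)$ and $(y_i,y_j,y_k)$, together with the right-identity equations $x_i\cdot e=x_i$ and $y_j\cdot e=y_j$. This produces a system that is linear in the entries of $Q$ but polynomial in the module parameters and in $\alpha$. Solving it against each even normal form should give the classification: $A_1$ extends uniquely to $B_1$; the family $A_{2,\alpha}$ extends to $B_{2,\alpha}$ (with $\beta=1+\alpha$, $\gamma=1-\alpha$), and the degenerate fibre $\alpha=-1$ carries the additional solution $\tilde B_{2,-1}$; while $A_3$ supports no consistent odd part. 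Finally, because $A(0,1)$ is simple, Lemma \ref{lem:3.3} shows each resulting $B$ is a simple LSSA, and exhibiting one nonvanishing associator among the $y_j$ (which persists even when the even part $A_{2,0}$ happens to be associative) shows none of them is associative.

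The main obstacle I anticipate is precisely the mixed-triple system of the last paragraph. The left-symmetry relations couple $O$ and $Q$ rather than decoupling cleanly, so one cannot simply fix the module first and then solve linearly for $Q$; and the outcome is not uniform across the three even forms — one must carefully track the exceptional collapse producing $\tilde B_{2,-1}$ at $\alpha=-1$ and isolate the obstruction that eliminates every extension of $A_3$.
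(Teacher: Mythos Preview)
Your outline is essentially the paper's own argument: reduce to Burde's three even normal forms via Theorem~\ref{thm:3.2}, write the $L(x_i),L(y_j)$ in block form, use $[x_i,y_j]=x_iy_j-y_jx_i$ and $[y_i,y_j]=y_iy_j+y_jy_i$ to determine the lower-left blocks of $L(y_j)$ and the symmetric part of the upper-right blocks, impose the right-identity and the commutator relations $[L(a),L(b)]=L([a,b])$ to solve case by case, and finish with Lemma~\ref{lem:3.3} and an explicit associator. Your repackaging of the odd block of $L(x_i)$ as a $\mathfrak{gl}(2)$-module is exactly what the paper does computationally (it imposes $[L(x_i),L(x_j)]=L([x_i,x_j])$ on the unknown $4\times4$ blocks rather than naming the module types in advance); the only point where you are more careful than the paper is in flagging that the normalization of $B_{\bar 0}$ must come from an automorphism of $A(0,1)$, not merely of $\mathfrak{gl}(2)$.
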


\begin{proof}

Let $B$ be a compatible LSSA on $A(0,1)$. Then $B_{\bar 0}$ is a compatible left-symmetric algebra on $A(0,1)_{\bar 0}\cong \mathfrak{gl}(2)$.
By Theorem \ref{thm:3.2}, $B_{\bar 0}$ is isomorphic to $A_{1}, A_{2,\alpha}$  or $A_{3}$.

Case (1):  $B_{\bar{0}}\cong A_{1}$. Since $L$  is an even linear map, the matrices $L(x_{i}), L(y_{j})(1\leq i,j\leq4)$ are of the form
\begin{eqnarray*}
&& L(x_1)=\left(\begin{array}{c|c}
\begin{smallmatrix}
0&1/2&-1&1\\
0&0&0&0\\
0&1/2&0&0\\
0&1/2&0&0 \end{smallmatrix}&0\\
\hline 0&(a_{ij})\\
 \end{array}\right),L(x_2)= \left(\begin{array}{c|c}
\begin{smallmatrix}
1/2&0&0&-1/2\\
0&0&1&1\\
-1/2&0&0&1/2\\
1/2&0&0&-1/2\end{smallmatrix}&0\\
\hline 0&(b_{ij})\\
 \end{array}\right),\\
 &&L(x_3)= \left(\begin{array}{c|c}
\begin{smallmatrix}
1&0&1&-1\\
0&-1&0&0\\
0&0&0&1\\
0&0&1&0\end{smallmatrix}&0\\
\hline 0&(c_{ij})\\
 \end{array}\right),L(x_4)= \left(\begin{array}{c|c}
\begin{smallmatrix}
1&-1/2&-1&-1\\
0&1&0&0\\
0&1/2&1&0\\
0&-1/2&0&1\end{smallmatrix}&0\\
\hline 0&(d_{ij})\\
 \end{array}\right), \\
 &&L(y_1)= \left(\begin{array}{c|c}
0&(e_{ij})\\
\hline (m_{ij})&0\\
 \end{array}\right),L(y_2)= \left(\begin{array}{c|c}
0&(f_{ij})\\
\hline (n_{ij})&0\\
 \end{array}\right),\\
 &&L(y_3)= \left(\begin{array}{c|c}
0&(g_{ij})\\
\hline (p_{ij})&0\\
 \end{array}\right),L(y_4)= \left(\begin{array}{c|c}
0&(h_{ij})\\
\hline (q_{ij})&0\\
 \end{array}\right),
 \end{eqnarray*}
where  $(a_{ij}),(b_{ij}), (c_{ij}), (d_{ij}), (e_{ij}), (f_{ij}), (g_{ij}), (h_{ij}), (m_{ij}), (n_{ij}), (p_{ij}), (q_{ij})$ are $4\times 4$  matrices.  Since $$[x_i,y_j]=x_iy_j-y_j x_i,$$ for all $ i,j=1,2,3,4,$ the matrices $(m_{ij}),(n_{ij}),(p_{ij})$ and $(q_{ij})$ can be determined by  $(a_{ij}),(b_{ij}),(c_{ij})$ and $(d_{ij})$. Namely,
\begin{eqnarray*}
&& (m_{ij})=
\begin{pmatrix}
a_{11}&b_{11}&c_{11}+1&d_{11}-1\\
a_{21}+1&b_{21}&c_{21}&d_{21}\\
a_{31}&b_{31}&c_{31}&d_{31}\\
a_{41}&b_{41}&c_{41}&d_{41} \end{pmatrix},
(n_{ij})=
\begin{pmatrix}
a_{12}&b_{12}+1&c_{12}&d_{12}\\
a_{22}&b_{22}&c_{22}-1&d_{22}-1\\
a_{32}&b_{32}&c_{32}&d_{32}\\
a_{42}&b_{42}&c_{42}&d_{42} \end{pmatrix},\\
 &&(p_{ij})=
\begin{pmatrix}
a_{13}&b_{13}&c_{13}&d_{13}\\
a_{23}&b_{23}&c_{23}&d_{23}\\
a_{33}&b_{33}&c_{33}-1&d_{33}+1\\
a_{43}&b_{43}-1&c_{43}&d_{43} \end{pmatrix},
 (q_{ij})=
\begin{pmatrix}
a_{14}&b_{14}&c_{14}&d_{14}\\
a_{24}&b_{24}&c_{24}&d_{24}\\
a_{34}-1&b_{34}&c_{34}&d_{34}\\
a_{44}&b_{44}&c_{44}+1&d_{44}+1
\end{pmatrix}.
 \end{eqnarray*}
Since $[y_i,y_j]=y_iy_j+y_j y_i$ and $[y_i,y_i]=0$ $(i,j=1,2,3,4)$, we have
\begin{eqnarray*}
(e_{ij})=
\begin{pmatrix}
0&e_{12}&e_{13}&e_{14}\\
0&e_{22}&e_{23}&e_{24}\\
0&b_{32}&e_{33}&e_{34}\\
0&b_{42}&e_{43}&e_{44} \end{pmatrix},
&& (f_{ij})=
\begin{pmatrix}
-e_{12}&0&f_{13}&f_{14}\\
-e_{22}&0&f_{23}&f_{24}\\
-e_{32}&0&f_{33}&f_{34}\\
-e_{42}&0&f_{43}&f_{44} \end{pmatrix}
,\\
 (g_{ij})=
\begin{pmatrix}
-e_{13}&-f_{13}+1&0&g_{14}\\
-e_{23}&-f_{23}&0&g_{24}\\
-e_{33}+1/2&-f_{33}&0&g_{34}\\
-e_{43}+1/2&-f_{43}&0&g_{44} \end{pmatrix},
&& (h_{ij})=
\begin{pmatrix}
-e_{14}&-f_{14}&-g_{14}&0\\
-e_{24}+1&-f_{24}&-g_{24}&0\\
-e_{34}&-f_{34}-1/2&-g_{34}&0\\
-e_{44}&-f_{44}+1/2&-g_{44}&0 \end{pmatrix}.
 \end{eqnarray*}
Note that $B$ has a  right identity $e$, that is,
 $xe=x$ for all $x\in B$. It is easy to see that  $e$ must be an even element. Suppose that $e=k_1x_1+k_2x_2+k_3x_3+k_4x_4$.
 From $$x_3=x_3e=k_1x_1-k_2x_2+k_3(x_1+x_4)+k_4(-x_1+x_3),$$ it follows that $k_1=1,k_2=0,k_3=0,k_4=1$, that is, $e=x_1+x_4$.
By Case (i) of Theorem \ref{thm:3.2}, we see that $e=x_1+x_4$ is the  right identity of $B_{\bar 0}.$ On the other hand, by the assumption that $e=x_1+x_4$
is the  right identity of $B$, we have
$y_i(x_1+x_4)=y_i, i=1,2,3,4$. Then $(d_{ij})$ can be determined by $(a_{ij})$, that is,
$$(d_{ij})=\begin{pmatrix}
2-a_{11}&-a_{12}&-a_{13}&-a_{14}\\
-1-a_{21}&2-a_{22}&-a_{23}&-a_{24}\\
-a_{31}&-a_{32}&-a_{33}&1-a_{34}\\
-a_{41}&-a_{42}&-a_{43}&-a_{44}
\end{pmatrix}.
$$

Notice that $(B,L)$ is a representation of  Lie superalgebra $A(0,1)$, we have $[L(x_1),L(x_4)]=0$ and thus $(a_{ij})(d_{ij})-(d_{ij})(a_{ij})=0$. A direct computation implies that
\begin{eqnarray}\label{eq:3.3}
&&a_{12}=a_{13}=a_{14}=a_{23}=a_{24}=a_{31}=a_{32}=a_{41}=a_{42}=a_{43}=0,\\
&&a_{11}=a_{22}, a_{33}=a_{44}.\nonumber
\end{eqnarray}
Since $[L(x_3),L(x_1)]=2L(x_1)$, it follows from (\ref{eq:3.3}) that
 \begin{eqnarray}
 &&a_{11}=a_{33}=c_{12}a_{21}=c_{43}a_{34}=0,\label{eq:3.4}\\
 &&a_{21}(2+c_{11}-c_{22})=a_{34}(2+c_{44}-c_{33})=0.\label{eq:3.5}
 \end{eqnarray}
Note that $[L(x_3),L(x_4)]=0$.  Combining (\ref{eq:3.3}) with  (\ref{eq:3.4}), we have
\begin{eqnarray}
 &&c_{13}=c_{14}=c_{23}=c_{24}=c_{31}=c_{32}=c_{41}=c_{42}=0,\label{eq:3.6}\\
 &&(1+a_{21})(c_{11}-c_{22})=(1-a_{34})(c_{33}-c_{44})=0.\label{eq:3.7}
 \end{eqnarray}
By the analogous computations,  $[L(x_2),L(x_4)]=0$, $[L(x_1),L(x_2)]=L(x_3)$, $[L(x_3),L(x_2)]=-2L(x_2)$ and (\ref{eq:3.3}), (\ref{eq:3.4}) and (\ref{eq:3.6}) give rise to the following conditions:
\begin{eqnarray*}
 &&b_{13}=b_{14}=b_{23}=b_{24}=b_{31}=b_{32}=b_{41}=b_{42}=0,\\
 &&(1+a_{21})(b_{11}-b_{22})=(1+a_{21})b_{12}=0,   c_{34}=a_{34}(b_{44}-b_{33}),  \\
 &&c_{12}=c_{43}=0, c_{22}=-c_{11}=a_{21}b_{12}, c_{33}=-c_{44}=a_{34}b_{43},\\
 &&c_{21}=a_{21}(b_{11}-b_{22}), (1-a_{34})(b_{33}-b_{44})=b_{43}(1-a_{34})=0,\\
 &&b_{11}=-b_{22}=\frac{1}{2}b_{12}c_{21}, b_{33}=-b_{44}=-\frac{1}{2}c_{34}b_{43},\\
 &&b_{21}(2+c_{22}-c_{11})=c_{21}(b_{22}-b_{11}), b_{34}(2+c_{33}-c_{44})=c_{34}(b_{33}-b_{44}).
 \end{eqnarray*}
It follows from (\ref{eq:3.5}) and (\ref{eq:3.7}) that $a_{21}=0$ or $-1$,  $a_{34}=0$ or $1$.  Hence there exist four possible cases.

Case (a):  $a_{ij}=b_{ij}=c_{ij}=d_{ij}=0$ for all $i,j=1,2,3,4$;

Case (b):  $a_{21}=-1, b_{12}=-1, c_{11}=-1, c_{22}=1, b_{11}=a, b_{22}=-a, b_{21}=a^2, c_{21}=-2a, a\in \mathbf{C}$ and  other $a_{ij}, b_{ij}, c_{ij}, d_{ij}$ are zero;

Case (c):  $a_{34}=1, b_{43}=1, c_{33}=1, c_{44}=-1, b_{33}=a, b_{44}=-a, b_{34}=-a^2, c_{34}=-2a, a\in \mathbf{C}$ and other $a_{ij}, b_{ij}, c_{ij}, d_{ij}$ are zero;

Case (d):  $a_{21}=-1, a_{34}=1, b_{12}=-1, b_{43}=1,c_{11}=-1, c_{22}=1, c_{33}=1, c_{44}=-1, b_{11}=a, b_{22}=-a, b_{21}=a^2, c_{21}=-2a,  b_{33}=b, b_{44}=-b, b_{34}=-b^2, c_{34}=-2b, a,b\in \mathbf{C}$
and other $a_{ij}, b_{ij}, c_{ij}, d_{ij}$ are zero.

 Since $[L(x_1),L(y_2)]=0$, we have $$a_{21}(b_{12}+1)+\frac{1}{2}c_{11}=0.$$ Then the above Cases (b) and (d) do not exist.
  By $[L(x_1),L(y_3)]=0$, it follows that $$a_{34}(b_{43}-1)-\frac{1}{2}c_{33}=0.$$ Then the above Case (c) does not exist.
Moreover, by $[L(y_i),L(y_i)]=0, i=1,2,3,4, [L(y_1),L(y_2)]=0$ and Case (a), we have  a unique solution $$e_{14}=e_{44}=-e_{34}=\frac{1}{4}, f_{14}=-\frac{1}{2}   \textrm{ and  other } e_{ij}, f_{ij}, g_{ij} \textrm{ are zero}.$$
Hence we obtain the superalgebra $B_1$ given by Case (i).
We observe that the supercommutator of $B_1$  gives  Lie superalgebra $A(0,1)$ and   $(B_1,L)$ is a representation of $A(0,1)$.
 Therefore,  $B_1$ is a compatible LSSA on $A(0,1)$.

 Case (2): $B_{\bar{0}}\cong A_{2,\alpha}$.
 Suppose that $e=k_1x_1+k_2x_2+k_3x_3+k_4x_4$ is a right identity.
By $$x_1=x_1e=k_2(\frac{1+\alpha}{2}x_3+\frac{1}{2}x_4)-k_3x_1+k_4(1+\alpha) x_1$$
and
$$x_2=x_2e=k_1(-\frac{1-\alpha}{2}x_3+\frac{1}{2}x_4)+k_3x_2+k_4(1-\alpha) x_2,$$ we have $k_1=k_2=0,k_3=\alpha,k_4=1$, that is, $e=\alpha x_3+x_4$.
By the analogous computations, we obtain the superalgebra
 $B_{2,\alpha}$ with a  right identity $e=\alpha x_3+x_4$. When $\alpha=-1$, we obtain an exceptional superalgebra
 $\widetilde{B_{2,-1}}$ with a  right identity $e=-x_3+x_4$..

 Case (3):    $B_{\bar{0}}\cong A_{3}$.
 It follows
 from the straightforward calculations that there is not a compatible
 LSSA  on $A(0,1)$ having a  right identity in this case.

By Lemma \ref{lem:3.3}, it is obvious that $B_{1}, B_{2,\alpha}$ and
$\widetilde{B_{2,-1}}$ are simple. By Theorem \ref{thm:3.2}, the even parts of $B_{1}, B_{2,\alpha}(\alpha\neq0)$ and
$\widetilde{B_{2,-1}}$ are not associative, so these LSSAs are not associative. Note that $B_{2,0}$ is not
 associative  since $(x_{3} x_{3})y_{1}-x_{3}(x_{3}
 y_{1})=2y_{1}\neq 0$.  This completes the proof.
\end{proof}

\begin{lem}\label{lem:3.5}
$B_{2,\alpha_1}\simeq B_{2,\alpha_2}$ if and only if $\alpha_1=\alpha_2$.
\end{lem}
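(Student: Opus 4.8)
The forward implication is immediate: if $\alpha_1=\alpha_2$ then $B_{2,\alpha_1}$ and $B_{2,\alpha_2}$ are literally the same LSSA. So the plan is to prove the converse, namely that an LSSA isomorphism $\phi\colon B_{2,\alpha_1}\to B_{2,\alpha_2}$ forces $\alpha_1=\alpha_2$. First I would note that $\phi$ is even and multiplicative, hence it restricts to an isomorphism $(B_{2,\alpha_1})_{\bar 0}\to(B_{2,\alpha_2})_{\bar 0}$ of left-symmetric algebras, i.e.\ $A_{2,\alpha_1}\cong A_{2,\alpha_2}$. By Theorem \ref{thm:3.2} this already gives $\alpha_1^{2}=\alpha_2^{2}$, so it remains only to exclude the case $\alpha_2=-\alpha_1$ with $\alpha_1\neq 0$. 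The genuinely super information must be used at exactly this point.

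The key device is the unique right identity. A right identity of $B_{2,\alpha}$ is even and restricts to a right identity of the reductive even part, so by Lemma \ref{lem:3.1} it is unique, namely $e_\alpha=\alpha x_3+x_4$. Hence $\phi(e_{\alpha_1})=e_{\alpha_2}$, and since the right multiplication satisfies $R(e_\alpha)=\mathrm{id}$ we have $\mathrm{ad}(e_\alpha)=L(e_\alpha)-R(e_\alpha)=L(e_\alpha)-\mathrm{id}$. Thus $\phi$ intertwines the semisimple derivations $\mathrm{ad}(e_{\alpha_1})$ and $\mathrm{ad}(e_{\alpha_2})$ and therefore matches their eigenspaces, respecting both eigenvalues and parity. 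Reading $\mathrm{ad}(e_\alpha)=\alpha\,\mathrm{ad}(x_3)+\mathrm{ad}(x_4)$ off the structure constants, the odd eigenvalues are $1-\alpha,1+\alpha,\alpha-1,-\alpha-1$ on $y_1,y_2,y_3,y_4$. When $\alpha_2=-\alpha_1$ and $\alpha_1\neq 0,\pm1$ these four numbers are distinct, and comparing them in $B_{2,\alpha_1}$ and $B_{2,-\alpha_1}$ forces the ``swap'' shape
\begin{equation*}
\phi(x_1)\in\langle x_2\rangle,\quad \phi(x_2)\in\langle x_1\rangle,\quad \phi(y_2)\in\langle y_1\rangle,\quad \phi(y_3)\in\langle y_4\rangle,
\end{equation*}
together with $\phi(\langle x_3,x_4\rangle)=\langle x_3,x_4\rangle$ and the analogous assignments for $y_1,y_4$.

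With this normal form in hand the contradiction comes from a single odd product that depends on the \emph{sign} of $\alpha$. In $B_{2,\alpha_1}$ one has the Lie relation $[y_2,y_3]=x_1$ together with the LSSA product $y_2\cdot y_3=-\tfrac{\alpha_1}{2}x_1$, read off from $L(y_2)$. Writing $\phi(y_2)=d'y_1$, $\phi(y_3)=fy_4$, $\phi(x_1)=cx_2$ and applying $\phi$, the bracket relation gives $d'f=c$, while the product relation, using $y_1\cdot y_4=\tfrac{\alpha_2(1+\alpha_2)}{4}x_2$ in $B_{2,\alpha_2}$ with $\alpha_2=-\alpha_1$, gives $d'f(1-\alpha_1)=2c$. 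As $c\neq 0$, these combine to $1-\alpha_1=2$, that is $\alpha_1=-1$, contradicting $\alpha_1\neq\pm1$.

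Finally the two excluded values constitute the single unordered pair $\{1,-1\}$, and I would dispose of $B_{2,1}\cong B_{2,-1}$ directly. Here the eigenvalue $1+\alpha_1=2$ of $y_2$ is still simple, so $\phi(y_2)\in\langle y_1\rangle$, whereas $y_3$ lies in the two-dimensional zero eigenspace. Since $y_1\cdot y_2=y_1\cdot y_4=0$ in $B_{2,-1}$, we get $\phi(y_2)\phi(y_3)=0$, while $\phi(y_2\cdot y_3)=-\tfrac12\phi(x_1)\neq 0$, again a contradiction. The main obstacle is therefore not the conceptual reduction but the eigenvalue bookkeeping: verifying that the right-identity grading forces the swap shape for generic $\alpha$, and separately handling the degenerate pair $\{1,-1\}$ at which two odd eigenvalues collide.
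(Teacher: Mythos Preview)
Your argument is correct, and it takes a genuinely different route from the paper's.

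The paper proceeds by brute force: it writes a putative isomorphism $\varphi\colon B_{2,\alpha}\to B_{2,-\alpha}$ as an $8\times 8$ block matrix $P$, imposes $\varphi(x_ix_j)=\varphi(x_i)\varphi(x_j)$ for all $i,j$ to pin down the even block (essentially the swap $x_1\leftrightarrow x_2$, $x_3\mapsto -x_3$, $x_4\mapsto x_4$), and then extracts from three odd products $y_1y_3$, $y_2y_3$, $y_1y_4$ the incompatible system $kp_{65}p_{78}=1$, $kp_{65}p_{78}=\tfrac{1+\alpha}{2}$, $kp_{65}p_{78}(1-\alpha)=2$, which forces $1+\alpha=2$ and $1-\alpha=2$ simultaneously. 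This handles every $\alpha\neq 0$ in one stroke, at the price of several pages of entry-by-entry bookkeeping.

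Your approach replaces that bookkeeping by a structural observation: the even right identity $e_\alpha=\alpha x_3+x_4$ is preserved by $\phi$, so $\phi$ intertwines the semisimple operators $\mathrm{ad}(e_{\alpha_1})$ and $\mathrm{ad}(e_{-\alpha_1})$, and the swap shape drops out instantly from the eigenvalue matching. A single product $y_2\cdot y_3$ then suffices. The cost is a short separate treatment of $\alpha=\pm 1$, where two odd eigenvalues collide; your handling of that case via $y_1\cdot y_2=y_1\cdot y_4=0$ in $B_{2,-1}$ is clean and correct. One small point worth making explicit: Lemma~\ref{lem:3.1} literally concerns left-symmetric \emph{algebras}, so the uniqueness you invoke is really ``$\phi(e_{\alpha_1})$ is an \emph{even} right identity, hence restricts to the unique right identity of $(B_{2,\alpha_2})_{\bar 0}\cong A_{2,\alpha_2}$, hence equals $e_{\alpha_2}$''. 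With that clarification the argument is complete.
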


\begin{proof}
Assume that $B_{2,\alpha_1}$ and $B_{2,\alpha_2}$ are isomorphic. Then their even parts are isomorphic.
By Theorem \ref{thm:3.2}, we have $\alpha_1=\pm\alpha_2$. If $B_{2,\alpha}$ and $B_{2,-\alpha}(\alpha\neq0)$ are isomorphic, then there exists an isomorphism $\varphi: B_{2,\alpha}\rightarrow B_{2,-\alpha}$. Let $P=(p_{ij})$ be the matrix of $\varphi$.  Since $\varphi$  is an even linear map, $P$ is of the form
$$\left(\begin{array}{c|cc}\begin{smallmatrix}
p_{11}&p_{12}&p_{13}&p_{14}\\
p_{21}&p_{22}&p_{23}&p_{24}\\
p_{31}&p_{32}&p_{33}&p_{34}\\
p_{41}&p_{42}&p_{43}&p_{44}\end{smallmatrix}&0\\
\hline
0& \begin{smallmatrix}
p_{55}&p_{56}&p_{57}&p_{58}\\
p_{65}&p_{66}&p_{67}&p_{68}\\
p_{75}&p_{76}&p_{77}&p_{78}\\
p_{85}&p_{86}&p_{87}&p_{88}\\
 \end{smallmatrix}
 \end{array}\right).$$
As $\varphi$ is an isomorphism, the determinant of $P$ cannot be zero.
Since $\varphi(x_1x_3)=\varphi(x_1)\varphi(x_3)$ and $\varphi(x_3x_1)=\varphi(x_3)\varphi(x_1)$, we have
\begin{equation}\label{eq:3.8}
\left\{\begin{aligned}
&-p_{11}p_{33}+(1-\alpha)p_{11}p_{43}+p_{31}p_{13}+(1-\alpha)p_{13}p_{41}=-p_{11},\\
&p_{21}p_{33}+(1+\alpha)p_{21}p_{43}-p_{31}p_{23}+(1+\alpha)p_{41}p_{23}=-p_{21},\\
&\frac{1-\alpha}{2}p_{11}p_{23}-\frac{1+\alpha}{2}p_{13}p_{21}+p_{31}((1-\alpha^2)p_{43}-\alpha p_{33})\\&+(1-\alpha^2)p_{41}(p_{33}+\alpha p_{43})=-p_{31},\\
&\frac{1}{2}p_{11}p_{23}+\frac{1}{2}p_{21}p_{13}+p_{31}p_{33}+\alpha p_{31}p_{43}+\alpha p_{41}p_{33}+(1+\alpha^2)p_{41}p_{43}=-p_{41}
 \end{aligned}\right.
 \end{equation}
and
\begin{equation}\label{eq:3.9}
\left\{\begin{aligned}
&p_{11}p_{33}+(1-\alpha)p_{11}p_{43}-p_{31}p_{13}+(1-\alpha)p_{13}p_{41}=p_{11},\\
&-p_{21}p_{33}+(1+\alpha)p_{21}p_{43}+p_{31}p_{23}+(1+\alpha)p_{41}p_{23}=p_{21},\\
&-\frac{1+\alpha}{2}p_{11}p_{23}+\frac{1-\alpha}{2}p_{13}p_{21}+p_{33}((1-\alpha^2)p_{41}-\alpha p_{31})\\&+(1-\alpha^2)p_{43}(p_{31}+\alpha p_{41})=p_{31},\\
&\frac{1}{2}p_{11}p_{23}+\frac{1}{2}p_{21}p_{13}+p_{31}p_{33}+\alpha p_{31}p_{43}+\alpha p_{41}p_{33}+(1+\alpha^2)p_{41}p_{43}=p_{41}.
 \end{aligned}\right.
 \end{equation}
Combining (\ref{eq:3.8}) with (\ref{eq:3.9}), we obtain
 \begin{equation}\label{eq:3.10}
\left\{\begin{aligned}
&p_{41}=0, (1-\alpha)p_{11}p_{43}=0, p_{11}p_{33}-p_{13}p_{31}=p_{11},\\
& (1+\alpha)p_{21}p_{43}=0, p_{31}p_{23}-p_{21}p_{33}=p_{21},2p_{31}=p_{21}p_{13}-p_{11}p_{23},\\
&-\alpha p_{11}p_{23}-\alpha p_{13}p_{21}-2\alpha p_{33} p_{31}+2(1-\alpha^2)p_{43}p_{31}=0,\\
&\frac{1}{2}p_{11}p_{23}+\frac{1}{2}p_{21}p_{13}+p_{31}p_{33}+\alpha p_{31}p_{43}=0.
 \end{aligned}\right.
 \end{equation}
With an analogous arguments for $\varphi(x_ix_j)=\varphi(x_i)\varphi(x_j), 1\leq i,j\leq 4.$
If $\alpha\neq 0$, then
$$p_{21}=k,p_{12}=\frac{1}{k}, p_{33}=-1, p_{44}=1,$$ and other  $p_{ij}(1\leq i,j\leq 4)$ are zero, where $0\neq k\in \mathbf{C}$.
Similarly,  $$p_{56}=kp_{65}\neq0, p_{87}=kp_{78}\neq0,  \textrm{ and other } p_{ij} (5\leq i,j\leq 8) \textrm{ are zero},$$ where $0\neq k\in \mathbf{C}$.
Observe that  $\varphi(y_1y_3)=\varphi(y_1)\varphi(y_3), \varphi(y_2y_3)=\varphi(y_2)\varphi(x_3)$ and $\varphi(y_1y_4)=\varphi(y_1)\varphi(y_4)$ give rise to the following conditions on the variables of $P$:
\begin{eqnarray}
&&kp_{65}p_{78}=1, \label{eq:3.11}\\
&&kp_{65}p_{78}=\frac{1+\alpha}{2},\label{eq:3.12}\\
&&kp_{65}p_{78}(1-\alpha)=2.\label{eq:3.13}
\end{eqnarray}
Substituting (\ref{eq:3.11}) into (\ref{eq:3.12}) and (\ref{eq:3.13}) respectively, we get $1+\alpha=2$ and $1-\alpha=2$, which is a contradiction.  Thus  $B_{2,\alpha}$ and $B_{2,-\alpha}$ are not isomorphic.
\end{proof}

Finally we need to prove

\begin{thm}
$B_{1}, B_{2,\alpha}$ and
$\widetilde{B_{2,-1}}$    are pairwise nonisomorphic.
\end{thm}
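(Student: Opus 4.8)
The plan is to prove pairwise nonisomorphism by distinguishing the three families via invariants that are preserved under LSSA isomorphism, rather than by attempting direct matrix computations in every case. The cleanest invariant available is the even part: any isomorphism $\varphi\colon B\to B'$ of LSSAs is in particular even, so it restricts to an isomorphism $\varphi|_{B_{\bar 0}}\colon B_{\bar 0}\to B'_{\bar 0}$ of the underlying left-symmetric algebras on $\mathfrak{gl}(2)$. Thus I would first record the even parts: by construction (Case (1), (2), (3) of Theorem~\ref{thm:3.4}) we have $(B_1)_{\bar 0}\cong A_1$, $(B_{2,\alpha})_{\bar 0}\cong A_{2,\alpha}$, and $(\widetilde{B_{2,-1}})_{\bar 0}\cong A_3$, where $A_1,A_{2,\alpha},A_3$ are the three types from Theorem~\ref{thm:3.2}.

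Next I would invoke the classification in Theorem~\ref{thm:3.2}, which tells us exactly when two of these even left-symmetric algebras are isomorphic: $A_{2,\alpha}\cong A_{2,\tilde\alpha}$ iff $\alpha^2=\tilde\alpha^2$, and the three labelled types $A_1,A_{2,\alpha},A_3$ are mutually distinct as left-symmetric algebras (their matrices of left multiplications are genuinely different structures, and in particular $A_3$ has the identity as right multiplication behaviour giving $L(z)=\mathrm{id}$, distinguishing it from the others). This immediately separates the three families across types: $B_1$, any $B_{2,\alpha}$, and $\widetilde{B_{2,-1}}$ have pairwise non-isomorphic even parts — $A_1$, $A_{2,\alpha}$, $A_3$ respectively — and hence cannot be isomorphic as LSSAs to one another or to a member of a different family. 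So $B_1\not\cong B_{2,\alpha}$, $B_1\not\cong\widetilde{B_{2,-1}}$, and $B_{2,\alpha}\not\cong\widetilde{B_{2,-1}}$ for every $\alpha$.

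The remaining and only substantive task is to separate the members of the one-parameter family $B_{2,\alpha}$ from one another. Here the even-part invariant is too coarse, because $A_{2,\alpha}\cong A_{2,-\alpha}$, so distinct parameters $\alpha$ and $-\alpha$ already have isomorphic even parts. This is precisely the content of Lemma~\ref{lem:3.5}, which I would simply cite: it establishes $B_{2,\alpha_1}\cong B_{2,\alpha_2}$ iff $\alpha_1=\alpha_2$ by showing that even though the even parts admit the sign-flip isomorphism when $\alpha\neq 0$, this cannot be extended to the odd part — the relations $\varphi(y_iy_j)=\varphi(y_i)\varphi(y_j)$ force the contradictory constraints $1+\alpha=2$ and $1-\alpha=2$ through equations~(\ref{eq:3.11})--(\ref{eq:3.13}). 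Combining Lemma~\ref{lem:3.5} with the even-part separation above yields the full pairwise nonisomorphism statement.

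I expect the main obstacle to be conceptual rather than computational: making sure that the even-part argument is airtight, i.e.\ that an LSSA isomorphism really does restrict to a left-symmetric algebra isomorphism of even parts (this follows since $\varphi$ is even and multiplicative, so it respects the restricted product on $B_{\bar 0}$), and that the three types $A_1,A_{2,\alpha},A_3$ are genuinely inequivalent in Theorem~\ref{thm:3.2}, which I would read off from the fact that they are listed as distinct isomorphism classes there (their $L(z)$ operators alone already differ, with $A_3$ singled out by $L(z)=\mathrm{id}$). Once these two facts are in hand, the proof is short: type-level separation handles all cross-family comparisons, and Lemma~\ref{lem:3.5} handles the intra-family parameter dependence.
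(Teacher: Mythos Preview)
Your proposal contains a genuine error in identifying the even part of $\widetilde{B_{2,-1}}$. You claim $(\widetilde{B_{2,-1}})_{\bar 0}\cong A_3$, apparently matching the enumeration (1), (2), (3) in the \emph{statement} of Theorem~\ref{thm:3.4} to the Cases (1), (2), (3) in its \emph{proof}. But these do not correspond: in the proof, Case~(3) (where $B_{\bar 0}\cong A_3$) yields \emph{no} compatible LSSA at all. The exceptional algebra $\widetilde{B_{2,-1}}$ arises in Case~(2) at the parameter value $\alpha=-1$, so $(\widetilde{B_{2,-1}})_{\bar 0}\cong A_{2,-1}$. You can verify this directly: the upper-left $4\times 4$ block of $L(x_4)$ for $\widetilde{B_{2,-1}}$ is not the identity (contradicting your own proposed check via $L(z)=\mathrm{id}$), and it matches the $A_{2,\alpha}$ form with $\beta=1+\alpha=0$, $\gamma=1-\alpha=2$.

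This misidentification breaks your cross-family argument. Since $A_{2,-1}\cong A_{2,1}$ by Theorem~\ref{thm:3.2}, the even-part invariant \emph{cannot} distinguish $\widetilde{B_{2,-1}}$ from $B_{2,-1}$ or from $B_{2,1}$. The paper handles exactly these two residual comparisons by a direct matrix computation: assuming an isomorphism $\eta\colon B_{2,-1}\to\widetilde{B_{2,-1}}$, the relations $\eta(x_ix_j)=\eta(x_i)\eta(x_j)$ pin down the even block of the matrix $Q$ of $\eta$, and then $\eta(x_1y_j)=\eta(x_1)\eta(y_j)$ forces an entire row of the odd block to vanish, contradicting invertibility; the case $B_{2,1}$ versus $\widetilde{B_{2,-1}}$ is analogous. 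Your even-part strategy is correct and matches the paper for separating $B_1$ from the rest and for separating $\widetilde{B_{2,-1}}$ from $B_{2,\alpha}$ with $\alpha\neq\pm 1$, and citing Lemma~\ref{lem:3.5} for the intra-family statement is fine; but you still owe the two computations for $\alpha=\pm 1$.
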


\begin{proof}
Firstly, $B_{1}$ is not isomorphic to $B_{2,\alpha}$ or
$\widetilde{B_{2,-1}}$, because their even parts are not isomorphic. According to Lemma \ref{lem:3.5}, the members of $B_{2,\alpha}$ are pairwise nonisomorphic. By Theorem \ref{thm:3.2}, we have that the even parts of  $\widetilde{B_{2,-1}}$ and $B_{2,\alpha}$ for $\alpha\neq \pm 1$ are not isomorphic.  Hence they are not isomorphic.

 Assume that $B_{2,-1}$ and $\widetilde{B_{2,-1}}$ are isomorphic. Let $\eta: B_{2,-1}\rightarrow \widetilde{B_{2,-1}}$ be an isomorphism and
  $Q=(q_{ij})$ be the matrix of $\eta$.  Since $\eta$  is an even linear map, we have that $Q$ is of the form
$$\left(\begin{array}{c|cc}\begin{smallmatrix}
q_{11}&q_{12}&q_{13}&q_{14}\\
q_{21}&q_{22}&q_{23}&q_{24}\\
q_{31}&q_{32}&q_{33}&q_{34}\\
q_{41}&q_{42}&q_{43}&q_{44}\end{smallmatrix}&0\\
\hline
0& \begin{smallmatrix}
q_{55}&q_{56}&q_{57}&q_{58}\\
q_{65}&q_{66}&q_{67}&q_{68}\\
q_{75}&q_{76}&q_{77}&q_{78}\\
q_{85}&q_{86}&q_{87}&q_{88}\\
 \end{smallmatrix}
 \end{array}\right).$$
 By $\eta(x_ix_j)=\eta(x_i)\eta(x_j), 1\leq i,j\leq 4$  and noting that the determinant of $Q$ cannot be zero,
we obtain  that  $$q_{11}=k,q_{22}=\frac{1}{k}, q_{33}=1, q_{44}=1, k\neq 0,k\in \mathbf{C} \textrm{ and other } q_{ij}, 1\leq i,j\leq 4, \textrm{ are zero}.$$
We proceed with an analogous computations for $\eta(x_1y_j)=\eta(x_1)\eta(y_j) (j=1,2,3,4)$.
Thus $$q_{55}=q_{56}= q_{57}=q_{58}=0,$$ which is a contradiction to the assumption that $\eta$ is an isomorphism.
Thus $B_{2,-1}$ and $\widetilde{B_{2,-1}}$ are not isomorphic.
Similarly, $B_{2,1}$ is not isomorphic to $\widetilde{B_{2,-1}}$. This completes the proof.
\end{proof}

We close this paper with the following question.

\begin{que}{\rm
 We would like to point out  the problem  that asks whether the  Lie superalgebras $A(m,n) (0\leq m<n)$ except for $A(0,1)$
have the  compatible LSSAs  and how to determine them remains open.
}
\end{que}

\section*{\textit{Acknowledgments}}

The author would like to thank Professor Jean-Louis
Loday and Yucai Su for their  discussion and useful suggestions.
This work was supported by NNSF of China (11226051) and the Fundamental Research
Funds for the Central Universities (11QNJJ001).


\begin{thebibliography}{9999}

\bibitem{AB2010} I. Ayadia and S. Benayadi, Symmetric Novikov superalgebras. J. Math. Phys. \textbf{51} (2010) 023501.

\bibitem{Bau1999} O. Baues, Left-symmetric algebras for $gl(n)$. Trans. Amer.
Math. Soc. \textbf{351} (1999) 2979-2996.

\bibitem{Bur1996}  D. Burde, Left-invariant affine structures on reductive Lie groups. J. Algebra
\textbf{181} (1996) 884-902.

\bibitem{Cha2004} F. Chapoton, Classification of some simple graded pre-Lie algebras of growth one. Comm. Algebra \textbf{32} (2004)
243-251.

\bibitem{Chu1974} B. Chu, Symplectic homogeneous spaces. Trans. Amer. Math. Soc. \textbf{197} (1974) 145-159.

\bibitem{DIO1997}  K. Dekimpe, P. Igodt and V. Ongenae, The five-dimensional complete left-symmetric algebra structures compatible with an abelian Lie algebra structure. Linear Algebra Appl. \textbf{263} (1997) 349-375.


\bibitem{Ger1963} M. Gerstenhaber,  The cohomology structure of an associative ring.  Ann. Math. \textbf{78} (1963) 267-288.


\bibitem{He1979} J. Helmstetter,  Radical d'une alg\`{e}bre sym\'{e}trique a gauche. Ann. Inst. Fourier \textbf{29} (1979) 17-35.

\bibitem{Kac1977} V. Kac, Lie superalgebras.  Adv.  Math. \textbf{26} (1977) 8-96.

\bibitem{KB2008}  X. Kong and C. Bai, Left-symmetric superalgebra structures on  the super-virasoro algebras. Pacific J. Math.
\textbf{235} (2008) 43-55.

\bibitem{KC2009}  Y. Kang and Z. Chen, Novikov superalgebras in low dimensions.  J. Nonlinear Math. Phys. \textbf{16} (2009) 251-257.

\bibitem{Kim1986} H. Kim,  Complete left-invariant affine structures on nilpotent Lie groups. J. Differ. Geom. \textbf{24} (1986)
373-394.


\bibitem{Kos1961} J. Koszul, Domaines born\'{e}s homog\`{e}nes et orbites de groupes de transformations affines. Bull. Soc. Math. France \textbf{89} (1961) 515-533.

\bibitem{Med1981} A. Medina Perea, Flat left-invariant connections adapted to the automorphism structure of a Lie group. J. Differ. Geom. \textbf{16} (1981) 445-474.


\bibitem{VM1996}  E. Vasilieva and A. Mikhalev, Free left-symmetric superalgebras. Fund. Appl. Math.  \textbf{2} (1996) 611-613.

\bibitem{WHB2010}  Y. Wang, D. Hou and C. Bai, Operator forms of the classical Yang-Baxter equation in
Lie superalgebras. Int. J. Geom. Methods Mod. Phys. \textbf{7} (2010) 583-597.


\bibitem{Xu2000a}  X. Xu,  Variational calculus of supervariables and related algebraic structures. J. Algebra \textbf{223} (2000) 396-437.

\bibitem{Xu2000b}  X. Xu, Quadratic conformal superalgebras. J. Algebra \textbf{231} (2000) 1-38.


\bibitem{Zha2011} R. Zhang, Left-symmetric superalgebras and some related superalgebrac structures. Ph.D thesis, Chern Inst. Math, Nankai University, Tianjin (2011).

\bibitem{ZB2012} R. Zhang and C. Bai, The classification of left-symmetric superalgebras in low dimensions. J. Algebra Appl. \textbf{11} (2012) 1250097 (26 pages).

\bibitem{ZHB2011} R. Zhang, D. Hou and C. Bai, A Hom-version of the affinizations of Balinskii-Novikov and Novikov superalgebras. J. Math. Phys. \textbf{52} (2011) 023505.

\end{thebibliography}
\end{document}